\documentclass[12pt,psamsfonts]{article}
\usepackage{amsmath}
\usepackage{amssymb}
\usepackage[mathcal]{eucal}
\usepackage[all]{xy}

\newtheorem{deff}{Definition}[section]
\newtheorem{lemma}[deff]{Lemma}
\newtheorem{theorem}[deff]{Theorem}
\newtheorem{coro}[deff]{Corollary}
\newtheorem{prop}[deff]{Proposition}

\newtheorem{propo}[deff]{Proposition}

\newtheorem{em-example}[deff]{Example}
\newtheorem{em-def}[deff]{Definition}        
\newtheorem{em-remark}[deff]{Remark}         
\newtheorem{em-question}[deff]{Question}

\newtheorem{problem}[deff]{Problem}

\newenvironment{remark}{\begin{em-remark} \em }{\end{em-remark}}

\newenvironment{proof}{\noindent {\it Proof}.}{\QED}


\newcommand{\palabras}{\noindent{\it Keywords: }{\small Reflexive;
Precompact; Pseudocompact; Baire property; Open refinement condition;
$h$-embedded subgroup; Convergent sequence; Almost metrizable}}

\newcommand\proved{\vbox{\hrule\hbox{\vrule\vrule
width 0pt height 4pt depth3.5pt\hskip7pt\vrule}\hrule}}
\newcommand\QED{\hfill \proved \medskip}

\newcommand{\bcal}{{\mathcal B}}

\newcommand{\ncal}{\mathcal {N}}


\def\emp{\emptyset}

\def\ker{\mathop{\rm ker}}

\def\sm{\setminus}
\def\sub{\subseteq}

\def\om{\omega}

\DeclareMathOperator*{\Int}{Int}

\DeclareMathSymbol{\res}{\mathord}{AMSa}{"16}

\def\:{\nobreak \hskip .1111em\mathpunct {}\nonscript \mkern
   -\thinmuskip {:}\hskip .3333emplus.0555em\relax}
\def\imp{\Rightarrow}

\catcode`\@=12
\def\T{{\mathbb T}}

\def\Z{{\mathbb Z}}

\def\cont{\mathfrak c}

\title{Pontryagin duality in the class of precompact Abelian groups
and the Baire property
\footnotetext{The work was done while the authors were visiting CRM
in May--July, 2008. Both authors are deeply grateful to the CRM
staff for support.}}
\author{M.~Bruguera
\thanks{This author was partially supported by Ministerio de
Educaci\'on y Ciencia,
 grant MTM2009-14409-C02-01.}
 \and
M.~Tkachenko
\thanks{This author was supported by CONACyT of Mexico, grant
000000000074468.} }

\date{April, 2009; revised version: September, 2010}

\begin{document}
\maketitle

\begin{abstract}
We present a wide class of reflexive, precompact, non-compact,
Abelian topological groups $G$ determined by three requirements.
They must have the Baire property, satisfy the \textit{open
refinement condition}, and contain no infinite compact subsets. This
combination of properties guarantees that all compact subsets of the
dual group $G^\wedge$ are finite. We also show that many
(non-reflexive) precompact Abelian groups are quotients of reflexive
precompact Abelian groups. This includes all precompact almost
metrizable groups with the Baire property and their products. Finally,
given a compact Abelian group $G$ of weight $\geq 2^\om$, we
find proper dense subgroups $H_1$ and $H_2$ of $G$ such that
$H_1$ is reflexive and pseudocompact, while $H_2$ is non-reflexive
and almost metrizable.
\end{abstract}

\palabras
\medskip

\noindent MSC: primary 43A40, 22D35; secondary 22C05, 54E52, 54C10

\section{Introduction}
The Pontryagin--van~Kampen duality theory has been pushed outside
the realm of locally compact Abelian groups during the last 60 years.
It turned out that many topological Abelian groups were reflexive without
being locally compact \cite{Aus,Ban,GH,Ka,Kap,No,Pe,Ve}. Forty years
had passed after the emergence of the duality theory until the first
reflexive non-complete group was constructed by Y.~Komura in
\cite{Kom}. The difficulty of finding such examples was clarified
independently by L.~Au{\ss}enhofer and M.\,J.~Chasco in \cite{Aus}
and \cite{Cha}, respectively, where they proved that every metrizable
reflexive topological group must be complete.

However, none of the reflexive groups found up to 2008 was a proper
dense subgroup of a compact group. Subgroups of compact groups are
called \textit{precompact}. Since a precompact group is compact if and
only if it is complete, every reflexive precompact metrizable group is
compact by Au{\ss}enhofer--Chasco's theorem. The problem of whether
\textit{all\/} reflexive precompact groups are compact appears explicitly
in the article \cite{ChMa} by Chasco and Mart\'{\i}n-Peinador.

Very recently, a series of reflexive, \textit{pseudocompact}, non-compact
groups appeared in \cite{ACDT} and \cite{GM}. We recall that a
Tychonoff space is pseudocompact if every continuous real-valued
function defined on the space is bounded. Since, by \cite[Theorem~1.1]{CR},
every pseudocompact topological group is precompact, this solved the
problem.

Our aim is to continue the study of reflexivity in the class of precompact
groups and present an even wider class of precompact, non-compact
(hence non-complete) reflexive groups. In fact, one of the main results
in \cite{ACDT}, the reflexivity of pseudocompact groups without infinite
compact subsets (which was also independently proved in
\cite{GM}), is our Corollary~\ref{Cor:SBB}.

By the Comfort--Ross duality theorem in \cite{CRo}, every precompact
Abelian group $G$ becomes reflexive if both the dual group $G^\wedge$
and bidual group $G^{\wedge\wedge}$ carry the topology of pointwise
convergence, i.e., $G\cong(G_p^\wedge)_p^\wedge$ (for generalizations,
see also \cite{RT}). This result is a common basis for applications in
\cite{ACDT,GM} and here.

Our strategy is as follows. First, we work with precompact groups
$G$ without infinite compact subsets. This implies the coincidence
of the compact-open and pointwise convergence topologies on the dual
group $G^\wedge$. Then we find conditions under which the dual group
$G^\wedge$ contains no infinite compact subsets either. This finishes
the job, implying the reflexivity of $G$ by Corollary~\ref{Cor:CR}.

To guarantee the absence of infinite compact sets in $G^\wedge$ it
suffices, by Theorem~\ref{Th:2}, to require that $G$ have the Baire
property and satisfy the \textit{Open Refinement Condition\/} (ORC,
for short) defined in Subsection~\ref{Subs} below. Hence every
precompact  Abelian group with the two properties and without
infinite compact subsets is reflexive, which is our Theorem~\ref{Th:MM}.
Since pseudocompact groups have the Baire property and satisfy
ORC, we deduce in Corollary~\ref{Cor:SBB} that every pseudocompact
Abelian group without infinite compact subsets is reflexive (see
\cite[Theorem~2.2]{ACDT} or \cite[Theorem~6.1]{GM}). The proof
of Theorem~\ref{Th:MM} makes use of the dual characterization of
ORC obtained in Lemma~\ref{Le:Mon} and saying that a precompact
group $G$ satisfies ORC if and only if the closure of every countable
set in the dual group $G_p^\wedge$ endowed with the pointwise
convergence topology remains countable.

In Section~\ref{Sec:ORP} we discuss the relations between the
Baire property and ORC in precompact groups. It is shown in
Proposition~\ref{SC} and Theorem~\ref{Prop:Pro} that the class
of precompact groups with the Baire property is closed under
taking continuous homomorphic images and forming arbitrarily
large direct products, respectively. It is worth mentioning that
the Baire property fails to be productive even in linear normed
spaces \cite{Val,MP}, so one cannot omit the precompactness
requirement in Theorem~\ref{Prop:Pro}. In Lemma~\ref{Le:Bai}
and Proposition~\ref{Le:FF} we show that the Baire property and
ORC are preserved under extensions of precompact groups
by pseudocompact groups.

Our main objective in Section~\ref{Sec:4} is to find out how wide
the class of quotients of reflexive precompact groups is. The main
result here is Theorem~\ref{Th:Qu} which says that every precompact
Abelian group with the Baire property that satisfies ORC is a quotient
of a reflexive precompact group with respect to a closed
pseudocompact subgroup. These quotients include, in particular,
all almost metrizable precompact groups with the Baire property
and, hence, all precompact second countable groups with the
Baire property. These results are complemented in
Theorem~\ref{Ex:22} which says that every compact Abelian group
$G$ of weight greater than or equal to $2^\om$ contains a proper
dense reflexive pseudocompact subgroup. Finally, in
Proposition~\ref{Prop:Prod} we prove that the product of an arbitrary
family of almost metrizable precompact groups with the Baire
property is a quotient of a reflexive precompact group. This requires
two preliminary steps, Propositions~\ref{th:Per} and~\ref{Prop:PP},
where the permanence properties of the class of precompact groups
satisfying ORC are established---this class is closed under taking
continuous homomorphic images and arbitrarily large products.

Several open problems about the reflexivity and Baire property of
precompact groups are posed in Section~\ref{Sec:Problem}.

\subsection{Notation and terminology}\label{Subs}
We use the additive notation for multiplication in the case of
commutative groups, except for the circle group $\T$. All
topological groups are assumed to be Hausdorff. A subgroup
$H$ of a group $G$ is called \textit{invariant\/} if $xHx^{-1}=H$,
for each $x\in G$. The term \lq{invariant\rq} replaces \lq{normal\rq}
to avoid ambiguity when considering topological properties of
topological groups. The kernel of a group homomorphism
$f\colon G\to H$ is $\ker{f}$.

A topological group $G$ is \textit{$\om$-narrow\/} if for every
neighborhood $U$ of the neutral element in $G$, there exists a
countable set $C\sub G$ such that $UC=G$. A group $G$ is
$\omega$-narrow iff it is topologically isomorphic to a subgroup
of a product of second countable groups (see \cite[Theorem~3.4.23]{AT}).

The Ra\u{\i}kov completion of $G$ is denoted by $\varrho{G}$.
The group $\varrho{G}$ is compact if and only if $G$ is precompact
\cite[Theorem~3.7.16]{AT}. A remarkable property of the Ra\u{\i}kov
completion is that every continuous group homomorphism
$f\colon G\to H$ admits an extension to a continuous group
homomorphism $\widetilde{f}\colon \varrho{G}\to\varrho{H}$
\cite[Corollary~3.6.17]{AT}.

Given a topological Abelian group $G$, we use $G^\wedge$ to
denote the \textit{dual\/} group of $G$ consisting of all continuous
homomorphisms $\chi\colon G\to\T$ to the circle group $\T$. The
group $G^\wedge$ is endowed with the pointwise multiplication
and carries the compact-open topology. The \textit{bidual group\/}
is $G^{\wedge\wedge}=(G^\wedge)^\wedge$.

The \textit{evaluation homomorphism}
$\alpha_G\colon G\to G^{\wedge\wedge}$ is defined by the rule
$\alpha_G(x)(\chi)=\chi(x)$, for all $x\in G$ and $\chi\in G^\wedge$.
In general, $\alpha_G$ can fail to be continuous, injective or
surjective. If however $\alpha_G$ is a topological isomorphism
of $G$ onto $G^{\wedge\wedge}$, then the group $G$ is said
to be \textit{reflexive}. Pontryagin--van~Kampen's duality theorem
states that every locally compact topological Abelian group is
reflexive.

The same underlying group $G^\wedge$ endowed with the pointwise
convergence topology on elements of $G$ will be designated as
$G_p^\wedge$. A local base at the neutral element of $G_p^\wedge$
consists of the sets
 \[
 U(g_1,\ldots,g_k)=\{\chi\in G^\wedge: \chi(g_i)\in\T_+ \mbox{ for each }
 i\leq k\},
 \]
where $\T_+=\{e^{\pi{i}x}: -1/2<x<1/2\}$ and $g_1,\ldots,g_k$ are
arbitrary elements of $G$. Reformulating Corollary~\ref{Cor:CR} below,
we can say that every precompact Abelian group is reflexive when
the dual groups $G^\wedge$ and $G^{\wedge\wedge}$ carry the
pointwise convergence topology in place of the compact-open
topology.

According to \cite[Def.~4.4]{Tk1}, a subgroup $C$ of a topological
group $G$ is \textit{$h$-embedded\/} in $G$ if \textit{every\/}
homomorphism $f\colon C\to K$ to a compact group $K$ admits an
extension to a \textit{continuous\/} homomorphism $\tilde{f}\colon G\to K$.
Hence the topology of an $h$-embedded subgroup $C$ inherited
from $G$ is finer than the maximal precompact topology of the
(abstract) group $C$. If the group $G$ is precompact, then every
$h$-embedded subgroup of $G$ carries the finest precompact
topological group topology. It was shown by S.~Hern\'andez and
S.~Macario in \cite{HM} that, for precompact groups $G$, the
property that all countable subgroups of $G$ are $h$-embedded
is \textit{dual\/} to pseudocompactness. In other words, if all
countable subgroups of $G$ are $h$-embedded, then the dual
group $G^\wedge$ is pseudocompact and, in addition, if $G$
is pseudocompact, then all countable subgroups of $G^\wedge$
are $h$-embedded.

The following new concept plays an important role here. We say
that a topological group $G$ satisfies the \textit{Open Refinement
Condition\/} (abbreviated to \textit{ORC}) if for every continuous
homomorphism $f\colon G\to H$ onto a second countable group $H$,
one can find a continuous \textit{open\/} homomorphism $\pi\colon G\to
K$ onto a second countable group $K$ and a continuous homomorphism
$g\colon K\to H$ such that $f=g\circ\pi$. It is immediate from the
definition that every second countable group satisfies ORC. Since
every continuous onto homomorphism of compact groups is open, all
compact groups satisfy ORC as well.

A subset $Y$ of a space $X$ is called \textit{meager\/} or of the
\textit{first category\/} in $X$ provided that $Y$ is the union of a
countable family of nowhere dense sets in $X$. A space $X$ has the
\textit{Baire property\/} if the intersection of any countable
family of open dense sets in $X$ is dense in $X$ or, equivalently,
every non-empty open subset of $X$ is non-meager. Following \cite{Bou},
we say that a space is Baire if it has the Baire property, and the same
applies to topological groups.

A family $\ncal$ of subsets of a space $X$ is called a \textit{network\/}
for $X$ if for every $x\in X$ and every neighborhood $U$ of $x$,
there exists $N\in\ncal$ such that $x\in N\sub U$. Every base for
a space is a network, but not vice versa. It is easy to see that every
space with a countable network is separable. For unexplained
topological terms, see \cite{Eng}.

\section{Reflexive precompact groups}\label{Sec:1}
Here we discuss in detail what kind of precompact groups $G$ are
reflexive provided that all compact subsets of $G$ are finite. Our
approach leans on the Comfort--Ross duality theorem in \cite{CRo}
(a more general form of this duality can be found in \cite{MO}).
First we introduce a small piece of terminology.

Let $G$ be an abstract Abelian group and $\mathcal{P}_G$ the family of
all precompact (not necessarily Hausdorff) topological group topologies
on $G$. Let also $Hom(G,\T)$ be the group of homomorphisms of $G$
to the circle group $\T$ and $\mathcal{H}_G$ be the family of subgroups
of $Hom(G,\T)$. For every $H\in\mathcal{H}_G$, denote by $\tau_H$
the coarsest topology on $G$ that makes continuous each $\chi\in H$.
Clearly, $(G,\tau_H)$ is a precompact topological group, so $\tau_H\in
\mathcal{P}_G$ for each $H\in\mathcal{H}_G$. Conversely, given a
topology $\tau\in \mathcal{P}_G$, let $H_\tau=(G,\tau)^\wedge\in
\mathcal{H}_G$.

\begin{theorem}[Comfort--Ross duality theorem]\label{Du-CR}
Let $G$ be an abstract Abelian group and $\Phi\colon \mathcal{H}_G
\to \mathcal{P}_G$ and $\Psi\colon \mathcal{P}_G\to \mathcal{H}_G$
be mappings defined by $\Phi(H)=\tau_H$ for each $H\in \mathcal{H}_G$
and $\Psi(\tau)=H_\tau$ for each $\tau\in\mathcal{P}_G$. Then both
$\Phi$ and $\Psi$ are bijections and $\Psi=\Phi^{-1}$.
\end{theorem}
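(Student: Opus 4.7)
The plan is to verify separately that the two compositions $\Phi\circ\Psi$ and $\Psi\circ\Phi$ are the identity on their respective domains; once this is done, bijectivity of $\Phi$ and $\Psi$ and the relation $\Psi=\Phi^{-1}$ follow automatically. The two identities $\tau_{H_\tau}=\tau$ and $H_{\tau_H}=H$ have quite different flavors: the first is essentially Pontryagin duality for compact Abelian groups, while the second requires identifying every $\tau_H$-continuous character as a finite $\Z$-linear combination of elements of $H$.

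For $\tau_{H_\tau}=\tau$, the inclusion $\tau_{H_\tau}\sub\tau$ is immediate, since every $\chi\in H_\tau$ is by definition $\tau$-continuous. For the reverse inclusion, I would pass to the Hausdorff quotient $(G,\tau)/N$, where $N$ is the $\tau$-closure of $\{0\}$, and take its Ra\u{\i}kov completion $K$, which is compact by precompactness of $\tau$. Pontryagin duality applied to $K$ says that the topology of $K$ is initial with respect to $K^\wedge$; restricting characters along the dense inclusion $G/N\hra K$ and pulling back to $G$ exhibits $\tau$ as the initial topology induced by the family $H_\tau$, so $\tau\sub\tau_{H_\tau}$.

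The main obstacle lies in the reverse identity $H_{\tau_H}=H$, where $H\sub H_{\tau_H}$ is already built into the definition of $\tau_H$. For the nontrivial inclusion, fix a $\tau_H$-continuous character $\chi\colon G\to\T$ and consider the diagonal homomorphism $j\colon(G,\tau_H)\to\T^H$ given by $j(g)=(\chi'(g))_{\chi'\in H}$. Since $\chi$ is continuous into a Hausdorff group, it annihilates the $\tau_H$-closure of $\{0\}$, which coincides with $\ker j=\bigcap_{\chi'\in H}\ker\chi'$, so $\chi$ factors through $j|_G$. Because $\tau_H$ is the initial topology induced by $H$, this factorization is continuous for the subspace topology on $j(G)\sub\T^H$, extends continuously to the compact closure $K=\overline{j(G)}$, and then, by surjectivity of the restriction $(\T^H)^\wedge\to K^\wedge$ for closed subgroups of compact Abelian groups, lifts to a continuous character of $\T^H$. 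Since $(\T^H)^\wedge\cong\bigoplus_{\chi'\in H}\Z$, this lift has the form $\sum_{i=1}^n n_i\pi_{\chi_i'}$, and pulling back along $j$ yields $\chi=\sum_i n_i\chi_i'\in H$. The non-Hausdorff case is absorbed transparently by the step in which $\chi$ is shown to vanish on $\overline{\{0\}}$.
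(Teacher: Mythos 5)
The paper does not prove Theorem~\ref{Du-CR}: it is quoted from Comfort and Ross \cite{CRo}, and the authors only ever use its Corollary~\ref{Cor:CR}, for which they refer to \cite{RT}. So your argument can only be compared with the classical one, and it is in fact that argument: your proof is correct and complete in all essentials. The decomposition into the two identities $\tau_{H_\tau}=\tau$ and $H_{\tau_H}=H$ is the right one; the first reduces, as you say, to the Peter--Weyl fact that a compact Abelian group carries the initial topology of its character group, applied to the completion of the Hausdorff reflection (the only silent step is that every $\tau$-open set is saturated with respect to $N=\overline{\{0\}}$, so that $\tau$ really is the pullback of the quotient topology --- standard, but worth a line). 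The second identity is the substance of the theorem, and your chain --- factor $\chi$ through $j(G)$, note that $j$ is a quotient map onto its image because an initial surjection is final, extend to $K=\overline{j(G)}$ by uniform continuity and completeness of $\T$, lift to $\T^H$ by surjectivity of restriction of characters to closed subgroups of compact groups, and read off $\chi=\sum_i n_i\chi_i'$ from $(\T^H)^\wedge\cong\bigoplus_{\chi'\in H}\Z$ --- is exactly the Comfort--Ross proof. No gaps.
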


In fact, we will only use the following corollary to Theorem~\ref{Du-CR}
whose direct proof can be found in \cite{RT}:

\begin{coro}\label{Cor:CR}
Let $G$ be a precompact topological group and $G_p^\wedge$ be
the dual group endowed with the pointwise convergence topology.
Then the canonical evaluation mapping $\alpha_G\colon G\to
(G_p^\wedge)_p^\wedge$ is a topological isomorphism.
\end{coro}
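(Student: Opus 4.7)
My plan is to invoke Theorem~\ref{Du-CR} at two different levels. First, applying the bijection $\Phi=\Psi^{-1}$ to the given precompact topology $\tau\in\mathcal{P}_G$ of $G$ yields $\tau=\Phi(\Psi(\tau))=\tau_{G^\wedge}$. Concretely, the topology of $G$ is the coarsest one making every character continuous, with a local base at $0$ consisting of the sets
\[
V(\chi_1,\dots,\chi_k)=\{g\in G:\chi_i(g)\in\T_+,\ i\leq k\},\qquad \chi_1,\dots,\chi_k\in G^\wedge.
\]
In particular, $G^\wedge$ separates points of $G$, so $\alpha_G$ is injective.

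Second, I would apply Theorem~\ref{Du-CR} to the abstract Abelian group $A=G^\wedge$. The pointwise convergence topology $\tau_{pw}$ on $A$ belongs to $\mathcal{P}_A$ (since $G_p^\wedge$ embeds into the compact product $\T^G$), and by its very definition is the coarsest topology on $A$ making every evaluation $\mathrm{ev}_g\colon\chi\mapsto\chi(g)$ continuous. Hence $\tau_{pw}=\tau_{H'}=\Phi(H')$ for the subgroup $H'=\alpha_G(G)\subseteq\mathrm{Hom}(A,\T)$. Applying $\Psi$ to both sides gives
\[
(G_p^\wedge)^\wedge=\Psi(\tau_{pw})=H'=\alpha_G(G),
\]
which simultaneously yields surjectivity of $\alpha_G$ and an explicit description of its image.

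Finally, to see that $\alpha_G$ is a homeomorphism it suffices to unwind neighborhood bases: since $\alpha_G(g)(\chi)=\chi(g)$, the set $V(\chi_1,\dots,\chi_k)\subseteq G$ is carried bijectively onto $\{F\in(G_p^\wedge)^\wedge:F(\chi_i)\in\T_+,\ i\leq k\}$, which is precisely the corresponding basic neighborhood of $0$ in $(G_p^\wedge)_p^\wedge$. The main---and essentially only---substantive step is the second one, namely identifying the continuous characters of $G_p^\wedge$ with the evaluations at points of $G$, and this identification is exactly what the Comfort--Ross duality delivers; everything else is a direct unwinding of the definitions.
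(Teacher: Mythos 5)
Your proposal is correct, and it follows exactly the route the paper intends: the paper states this result as a corollary of Theorem~\ref{Du-CR} (referring to \cite{RT} only for a direct proof), and your two applications of the bijections $\Phi$, $\Psi$ --- first to recover $\tau=\tau_{G^\wedge}$ on $G$, then to the abstract group $G^\wedge$ with the subgroup $H'=\alpha_G(G)\subseteq\mathrm{Hom}(G^\wedge,\T)$ to get $(G_p^\wedge)^\wedge=\alpha_G(G)$ --- are precisely the intended derivation. The only point you pass over silently is the standard fact that, $H$ being a \emph{subgroup} of $\mathrm{Hom}(G,\T)$, the sets $V(\chi_1,\dots,\chi_k)$ with $\chi_i\in H$ form a genuine local base (not merely a subbase) for $\tau_H$ at $0$; this is routine and the paper asserts the analogous description for $G_p^\wedge$ without proof.
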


The interplay between the properties of $G$ and $G_p^\wedge$,
for precompact groups $G$, was studied in \cite{HM}.

By Corollary~\ref{Cor:CR}, to guarantee the (Pontryagin) reflexivity
of a precompact group $G$, it suffices that all compact subsets of
$G$ and of the dual group $G^\wedge$ be finite---in this case the
compact-open topology and the pointwise convergence topology
coincide.

We do the job in two steps. First, in Corollary~\ref{cor:Si}, we
describe  the groups $G$ with the property that the dual group
$G^\wedge$ does not contain non-trivial convergent sequences.
Second, imposing additional conditions on $G$, we deduce that
$G^\wedge$ does not contain infinite compact subsets (see
Theorem~\ref{Th:2}).

The following important fact is well known (see \cite{FT} or
\cite[Proposition~1.4]{CMT}):

\begin{propo}\label{propImF}
Let $\{h_n: n\in\om\}$ be a family of continuous homomorphisms of a
topological group $G$ to a topological group $H$. If the set of all
$x\in G$ such that $h_n(x)$ is a Cauchy sequence in $H$ is
non-meager in $G$, then the family $\{h_n: n\in\om\}$ is
equicontinuous.
\end{propo}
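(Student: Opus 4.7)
The plan is to run a Banach--Steinhaus style argument. Set $E=\{x\in G:(h_n(x))_n \text{ is Cauchy in }H\}$; by hypothesis $E$ is non-meager. I want to show that for every neighborhood $V$ of $e_H$ there is a neighborhood $U$ of $e_G$ with $h_n(U)\subseteq V$ for all $n$, which (by homogeneity of topological groups) gives equicontinuity everywhere. Switching to additive notation, as fits the Abelian setting of the paper, I first fix a closed symmetric neighborhood $W$ of $e_H$ with $W+W+W\subseteq V$.

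Next I manufacture the closed sets on which the Baire-type hypothesis will bite. For each $N\in\om$ let
\[
B_N=\bigcap_{m,n\ge N}(h_m-h_n)^{-1}(W)=\{x\in G:h_m(x)-h_n(x)\in W\text{ for all }m,n\ge N\}.
\]
Each $B_N$ is closed (intersection of preimages of a closed set under continuous maps), and by definition of Cauchy sequence $E\subseteq\bigcup_N B_N$. Since $E$ is non-meager, so is the union, hence some $B_{N_0}$ is non-meager; being closed, $B_{N_0}$ has non-empty interior. Pick $x_0$ in that interior, so $U_0:=-x_0+B_{N_0}$ is a neighborhood of $e_G$. For any $u\in U_0$, both $x_0$ and $u+x_0$ lie in $B_{N_0}$; using that $h_m,h_n$ are homomorphisms and that $W$ is symmetric,
\[
h_m(u)-h_n(u)=[h_m(u+x_0)-h_n(u+x_0)]-[h_m(x_0)-h_n(x_0)]\in W+W
\]
for all $m,n\ge N_0$. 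Specializing $n=N_0$ gives $h_m(u)\in h_{N_0}(u)+W+W$ for every $m\ge N_0$ and $u\in U_0$.

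To conclude, I handle the finite initial segment separately. The homomorphisms $h_0,\ldots,h_{N_0}$ are continuous and vanish at $e_G$, so I can shrink $U_0$ to a neighborhood $U'$ of $e_G$ with $h_i(U')\subseteq W$ for every $i\le N_0$. Then for $i\le N_0$ and $u\in U'$ we have $h_i(u)\in W\subseteq V$, while for $m>N_0$ and $u\in U'$,
\[
h_m(u)\in h_{N_0}(u)+W+W\subseteq W+W+W\subseteq V.
\]
Hence $h_n(U')\subseteq V$ for every $n$, which is exactly equicontinuity at $e_G$.

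The only delicate point is the bookkeeping on neighborhoods (choosing $W$ with $W+W+W\subseteq V$ and then isolating the finite head of the sequence from the uniform tail estimate); the Baire-type step itself is forced once one writes $E$ as a subset of a countable union of closed sets. In a non-Abelian target $H$ the commutator terms arising from $h_m(u+x_0)$ would need to be absorbed by iterating the argument with a still smaller $W$, but in the Abelian context of this paper they simply cancel, so no such complication arises.
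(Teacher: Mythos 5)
Your proof is correct, and the main point of comparison is that the paper does not prove Proposition~\ref{propImF} at all: it quotes it as a known fact with references to \cite{FT} and \cite[Proposition~1.4]{CMT}. Your self-contained Banach--Steinhaus/Baire-category argument---writing the Cauchy set $E$ inside the countable union of the closed sets $B_N$, extracting one that is non-meager and hence has interior, transferring the uniform tail estimate from a translate of $B_{N_0}$ to a neighborhood of the identity, and absorbing the finite head $h_0,\dots,h_{N_0}$ by shrinking that neighborhood---is the standard route and every step checks out (in particular, a closed non-meager set automatically has non-empty interior, so no Baire hypothesis on $G$ itself is needed). The one caveat concerns your closing remark: the proposition as stated permits a non-Abelian $H$, and there the difficulty is not a commutator that can be absorbed by shrinking $W$. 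Writing $h_m(u)h_n(u)^{-1}=h_m(ux_0)\,h_m(x_0)^{-1}h_n(x_0)\,h_n(ux_0)^{-1}$, one must conjugate an element of $W$ by the uncontrolled element $h_n(ux_0)$, and no a priori choice of $W$ tames that; this is exactly what makes the general case of \cite{FT} nontrivial. Since the paper only applies the proposition to characters $\chi_n\colon K\to\T$ in Corollary~\ref{cor:Si}, the Abelian target you treat is all that is actually used, so your proof fully serves the paper's purposes; just state the commutativity restriction as a hypothesis rather than as an afterthought.
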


We now use Proposition~\ref{propImF} to show that the dual group of
a precompact group with the Baire property does not contain
non-trivial convergent sequences.

\begin{coro}\label{cor:Si}
Let $G$ be a precompact Baire group, and suppose that a sequence
$\xi=\{h_n: n\in\om\}\sub G^\wedge$ converges pointwise on each
element of $G$. Then the sequence $\xi$ is eventually constant.
\end{coro}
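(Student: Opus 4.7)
My strategy would be to combine the equicontinuity granted by the Baire property (via Proposition~\ref{propImF}) with the total boundedness of $G$ to force, for all sufficiently large $n$ and $m$, the image $(h_n-h_m)(G)$ to lie inside $\T_+$; since the only subgroup of $\T$ contained in $\T_+$ is trivial, this will make $\{h_n\}$ eventually constant.

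First I would verify the hypothesis of Proposition~\ref{propImF}. Because $G$ is Baire and $\{h_n(x)\}$ converges (hence is Cauchy in $\T$) for \emph{every} $x\in G$, the set of Cauchy points is the whole of $G$, which is trivially non-meager in itself. Hence $\{h_n\}$ is equicontinuous, and choosing any $\delta>0$ with $4\delta<1/2$ I can fix a neighborhood $V$ of the neutral element in $G$ such that $h_n(V)\sub \T_\delta:=\{e^{i\pi x}:|x|<\delta\}$ for every $n\in\om$. A routine difference computation in $\T$ then gives $(h_n-h_m)(V)\sub \T_{2\delta}$ for all $n,m\in\om$.

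Next, precompactness of $G$ supplies a finite set $F\sub G$ with $G=V+F$. Since $F$ is finite and $\{h_n\}$ converges pointwise on $F$, there exists $N\in\om$ with $(h_n-h_m)(x)\in \T_{2\delta}$ for all $n,m\geq N$ and every $x\in F$. Writing an arbitrary $g\in G$ as $g=v+x$ with $v\in V$, $x\in F$, the additivity of the homomorphism $h_n-h_m$ places $(h_n-h_m)(g)$ into $\T_{2\delta}+\T_{2\delta}\sub \T_{4\delta}\sub \T_+$. Thus, for all $n,m\geq N$, $(h_n-h_m)(G)$ is a subgroup of $\T$ contained in $\T_+$.

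The last step is the well-known observation that the only subgroup of $\T$ contained in $\T_+$ is $\{0\}$: every infinite subgroup of $\T$ is dense in $\T$, while for each $k\geq 2$ the cyclic subgroup of order $k$ manifestly meets $\T\sm \T_+$. So $h_n=h_m$ for all $n,m\geq N$, and $\xi$ is eventually constant. The one genuinely nontrivial ingredient is the reduction to equicontinuity via Proposition~\ref{propImF}; once that is secured, the remainder is a clean interplay of precompactness with the absence of small subgroups in $\T$.
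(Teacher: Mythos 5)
Your argument is correct, and after the common first step it diverges from the paper's in an instructive way. Both proofs hinge on Proposition~\ref{propImF}, but you apply it to the family $\{h_n\}$ on $G$ itself, using only that a Baire space is non-meager in itself, whereas the paper first extends each $h_n$ to a character $\chi_n$ of the compact completion $K=\varrho{G}$ and applies the proposition there, using the (later-established) fact that a precompact Baire group is non-meager in its completion. From equicontinuity the paper then concludes via Arzel\'a--Ascoli that $\{\chi_n\}$ has compact closure in $K^\wedge$ and invokes the discreteness of the dual of a compact group; you instead argue by hand, combining equicontinuity with total boundedness of $G$ to place $(h_n-h_m)(G)$ inside $\T_+$ for large $n,m$ and then using that $\T$ has no small subgroups. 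Your route is more elementary and self-contained --- it avoids the extension to $\varrho{G}$, the Arzel\'a--Ascoli theorem, and the structure fact $K^\wedge$ discrete --- at the cost of a slightly longer $\eps$-chase; the paper's route is shorter on the page and generalizes more readily to statements about compactness of sets of characters rather than just sequences. One cosmetic point: since the paper writes $\T$ multiplicatively, the ``difference'' $h_n-h_m$ should be read as $h_n\cdot h_m^{-1}$, but the estimate $(h_n h_m^{-1})(V)\sub\T_{2\delta}$ and the final appeal to the triviality of subgroups of $\T$ contained in $\T_+$ go through exactly as you describe.
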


\begin{proof}
Let $K=\varrho{G}$ be the completion of $G$. Then $K$ is a compact
topological group and $G$ is a non-meager dense subgroup of $K$.
Every $h_n$ admits an extension to a character $\chi_n$ of $K$. It
now follows from Proposition~\ref{propImF} that the sequence
$\xi_K=\{\chi_n: n\in\om\}$ is equicontinuous on $K$. Hence
Arzel\'a--Ascoli's theorem implies that $\xi_K$ has compact closure
in $K^\wedge$. Since the group $K^\wedge$ is discrete, we conclude
that $\xi_K$ and $\xi$ are eventually constant.
\end{proof}

In the lemma below we present a characterization of precompact
groups satisfying ORC in dual terms.

\begin{lemma}\label{Le:Mon}
A precompact Abelian group $G$ satisfies the open refinement
condition iff for every countable set $S\sub G^\wedge$, the
closure of $S$ in $G_p^\wedge$ is countable.
\end{lemma}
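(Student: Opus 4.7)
The plan is to translate ORC into a statement about the dual group via the annihilator correspondence between closed subgroups of $G$ and closed subgroups of $G_p^\wedge$. The guiding heuristic is this: for a continuous open onto homomorphism $\pi\colon G\to K$ between precompact Abelian groups, the subgroup $\pi^\wedge(K^\wedge)\sub G^\wedge$ coincides with $(\ker\pi)^\perp$ and is in particular closed in $G_p^\wedge$; conversely, every closed subgroup $L\sub G_p^\wedge$ gives rise to an open quotient $G\to G/L^\perp$ whose dual recovers $L$. ORC should match precisely the availability of enough countable such subgroups, and both directions will come down to producing the appropriate one.

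For the forward implication, given a countable $S=\{\chi_n:n\in\om\}\sub G^\wedge$, package it into the continuous homomorphism $f\colon G\to\T^S$, $f(x)=(\chi(x))_{\chi\in S}$, and let $H=f(G)$ with the subspace topology; then $H$ is second countable and precompact. ORC provides an open continuous onto homomorphism $\pi\colon G\to K$ with $K$ second countable, together with a continuous $g\colon K\to H$ such that $f=g\circ\pi$. Setting $L=\pi^\wedge(K^\wedge)$, three facts suffice: (i) $K$ second countable and precompact makes $\vro K$ compact metrizable, so $K^\wedge$ and hence $L$ are countable; (ii) $L=(\ker\pi)^\perp$, because any $\chi\in G^\wedge$ vanishing on $\ker\pi$ descends to a continuous character of the open quotient $K=G/\ker\pi$, and annihilators are obviously closed in $G_p^\wedge$; (iii) for each $n$, $\chi_n=(p_n|_H\circ g)\circ\pi\in L$, where $p_n\colon\T^S\to\T$ is the $n$-th projection. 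Hence $\ovl{S}^{G_p^\wedge}\sub L$ is countable.

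For the converse, take a continuous onto homomorphism $f\colon G\to H$ with $H$ second countable. Then $H$ is precompact and metrizable, so $\vro H$ is compact metrizable and $H^\wedge$ is countable. Set $S=f^\wedge(H^\wedge)\sub G^\wedge$, a countable subgroup; by hypothesis, $L:=\ovl{S}^{G_p^\wedge}$ is a countable closed subgroup of $G_p^\wedge$. Put $N=L^\perp=\bigcap_{\chi\in L}\ker\chi$ and let $\pi\colon G\to K:=G/N$ be the quotient with the induced topology, automatically continuous and open. Since characters separate points of the Hausdorff precompact group $H$, $\ker f=\bigcap_{\chi\in S}\ker\chi\supseteq N$, so $f=g\circ\pi$ for a continuous $g\colon K\to H$ by the quotient property. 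It then suffices to show $K$ is second countable, and this reduces to $K^\wedge$ being countable; via the canonical identification $K^\wedge\cong N^\perp$ of characters of $K$ with continuous characters of $G$ vanishing on $N$, what is needed is the bipolar identity $N^\perp=L$.

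The main obstacle is thus this bipolar identity $L=L^{\perp\perp}$ for subgroups $L$ closed in $G_p^\wedge$. One inclusion is immediate from the definitions. For the other, if $\chi\in G^\wedge\sm L$, pick $g_1,\ldots,g_n\in G$ and $\eps>0$ such that no $\chi'\in L$ satisfies $|\chi(g_i)-\chi'(g_i)|<\eps$ for all $i\le n$. The evaluation map $\phi\colon G^\wedge\to\T^n$, $\chi'\mapsto(\chi'(g_1),\ldots,\chi'(g_n))$, then places $\phi(\chi)$ outside the closed subgroup $\ovl{\phi(L)}\le\T^n$, and Pontryagin duality for $\T^n$ yields $(m_1,\ldots,m_n)\in\Z^n$ with $\sum_i m_i\chi'(g_i)=0$ for every $\chi'\in L$ while $\sum_i m_i\chi(g_i)\neq 0$. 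Setting $y=\sum_i m_ig_i\in G$ gives $y\in L^\perp=N$ yet $\chi(y)\neq 0$, contradicting $\chi\in N^\perp$. Once this identity is in hand, the converse closes and both directions are complete.
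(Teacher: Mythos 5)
Your proof is correct, and the forward direction coincides with the paper's: both package $S$ into a diagonal map into $\T^S$, apply ORC to get an open $\pi\colon G\to K$ with $K$ second countable, and trap $\ovl{S}$ inside $\pi^\wedge(K^\wedge)=(\ker\pi)^\perp$, which is closed in $G_p^\wedge$ and countable because $\varrho{K}$ is compact metrizable. The sufficiency is where you genuinely diverge. The paper also passes to $K=G/\ker f$ with the quotient topology and reduces everything to showing $K_p^\wedge$ is countable, but it gets there by citing Comfort--Ross (their Theorem~1.9): the countable point-separating subgroup $i^\wedge(H_p^\wedge)$ is \emph{dense} in $K_p^\wedge$, so $\pi^\wedge(K_p^\wedge)$ sits inside the closure of the countable set $f^\wedge(H_p^\wedge)$ in $G_p^\wedge$ and the hypothesis finishes it. You instead identify $K^\wedge$ with $N^\perp$ for $N=L^\perp$ (note that $L^\perp=S^\perp=\ker f$, so your $K$ is the same topological group as the paper's) and prove the bipolar identity $L^{\perp\perp}=L$ for closed subgroups of $G_p^\wedge$ from scratch, by evaluating at finitely many points and invoking duality for closed subgroups of $\T^n$. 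These two tools are essentially equivalent --- the density of point-separating character subgroups is the bipolar theorem in disguise --- so what your route buys is self-containedness: you replace an external citation by an elementary finite-dimensional argument, at the cost of a slightly longer proof. Two small points you leave implicit but which are standard and available in the paper: the step ``$K^\wedge$ countable implies $K$ second countable'' uses that a precompact group carries the initial topology of its dual (Comfort--Ross duality), and the separation of points of $H$ by characters uses that $H$, as a continuous image of $G$, is precompact Hausdorff.
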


\begin{proof}
\textit{Necessity.}
Clearly, $G_p^\wedge$ is a topological subgroup of $\T^G$. Let $S$
be a countable subset of $G_p^\wedge$. Denote by $f$ the diagonal
product of the characters of $S$. Then $f$ is a continuous homomorphism
of $G$ to $\T^S$. Since the weight of $\T^S$ is not greater than
$|S|\cdot\om=\om$ and $G$ satisfies ORC, we can find an open
continuous homomorphism $\pi\colon G\to K$ onto a second countable
group $K$ and a continuous homomorphism $h\colon K\to\T^S$
such that $f=h\circ\pi$. Note that the group $K$ is precompact as
a continuous homomorphic of the precompact group $G$, so
the group $\varrho{K}$ is compact.

Our choice of $K$, $\pi$, and $h$ implies that $S\sub\pi^\wedge(K_p^\wedge)$,
where $\pi^\wedge\colon K_p^\wedge\to G_p^\wedge$ is the dual
homomorphism defined by $\pi^\wedge(\xi)=\xi\circ\pi$, for each
$\xi\in K_p^\wedge$. It is easy to see that $\pi^\wedge$ is a
topological and isomorphic embedding of $K_p^\wedge$ into
$G_p^\wedge$. Since the homomorphism $\pi$ is open, the image
$\pi^\wedge(K_p^\wedge)$ coincides with the annihilator of $\ker\pi$
in $G_p^\wedge$ and hence $\pi^\wedge(K_p^\wedge)$ is closed
in $G_p^\wedge$. Therefore, the closure of $S$ in $G_p^\wedge$
is homeomorphic to a subspace of $K_p^\wedge$.

Since $K$ is a dense subgroup of the compact group $\varrho{K}$,
we can identify the abstract groups $(\varrho{K})^\wedge$ and
$K^\wedge$. Clearly, $\varrho{K}$ has a countable base, so
$|K^\wedge|\leq\om$ by \cite[Theorem~24.15]{HR} or
\cite[Corollary~9.6.7]{AT}. Therefore, the closure of $S$ in
$G^\wedge$ is countable.

\textit{Sufficiency.}
Consider a continuous homomorphism $f\colon G\to H$ onto a second
countable group $H$. There exist an open continuous homomorphism
$\pi\colon G\to K$ and a continuous isomorphism (not necessarily a
homeomorphism) $i\colon K\to H$ such that $f=i\circ\pi$. Clearly,
the group $K$ is precompact.

Since $H$ is second countable, the group $H_p^\wedge$ is countable.
Let $i^\wedge\colon H_p^\wedge\to K_p^\wedge$ be the homomorphism
dual to $i$. Since $i$ is a bijection, the subgroup $i^\wedge(H_p^\wedge)$
of $K_p^\wedge$ separates points of $K$. Applying \cite[Theorem~1.9]{CR},
we conclude that $i^\wedge(H_p^\wedge)$ is dense in $K_p^\wedge$.

It follows from the continuity of the homomorphism
$\pi^\wedge\colon K_p^\wedge\to G_p^\wedge$ that the countable group
$\pi^\wedge(i^\wedge(H_p^\wedge))=f^\wedge(H_p^\wedge)$ is dense in
$\pi^\wedge(K_p^\wedge)$. By our assumption, the closure of
$f^\wedge(H_p^\wedge)$ in $G_p^\wedge$ is countable. Hence
$\pi^\wedge(K_p^\wedge)$ is also countable. But $\pi^\wedge$ is injective,
so $K_p^\wedge$ is countable as well. Therefore, the group $K$ is second
countable. Since the homomorphism $\pi\colon G\to K$ is continuous and
open, it follows from $f=i\circ\pi$ that $G$ satisfies ORC.
\end{proof}

Here is an application of Lemma~\ref{Le:Mon} to the study of
permanence properties of precompact groups satisfying ORC:

\begin{prop}\label{th:Per}
The class of precompact topological groups satisfying the open
refinement condition is closed with respect to taking continuous
homomorphic images.
\end{prop}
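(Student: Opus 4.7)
The plan is to derive the conclusion from the dual characterization of ORC given in Lemma~\ref{Le:Mon}. Fix a continuous surjective homomorphism $\varphi\colon G\to G'$ with $G$ precompact and satisfying ORC. Since the continuous image of a precompact group is precompact, $G'$ is precompact, so it suffices (by Lemma~\ref{Le:Mon} applied to $G'$) to verify that for every countable $S'\sub (G')^\wedge$, the closure of $S'$ in $(G')_p^\wedge$ is countable.

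The main device is the dual homomorphism $\varphi^\wedge\colon (G')_p^\wedge\to G_p^\wedge$ sending $\chi'\mapsto \chi'\circ\varphi$. The key technical step that I would carry out first is to check that $\varphi^\wedge$ is an injective topological embedding. Injectivity follows from the surjectivity of $\varphi$, since $\chi'\circ\varphi=0$ forces $\chi'=0$ on all of $G'$. For the topological part, pick $y_1,\dots,y_k\in G'$ and preimages $x_i\in\varphi^{-1}(y_i)$; a routine check using the definition of the basic neighborhoods $U(\cdot)$ shows
\[
\varphi^\wedge\bigl(U(y_1,\dots,y_k)\bigr)=\varphi^\wedge((G')^\wedge)\cap U(x_1,\dots,x_k),
\]
which identifies the topology of $(G')_p^\wedge$ with the subspace topology induced from $G_p^\wedge$ on $\varphi^\wedge((G')^\wedge)$.

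With this identification in hand, the remainder is immediate. Given a countable $S'\sub (G')^\wedge$, the image $S:=\varphi^\wedge(S')$ is a countable subset of $G^\wedge$, so by Lemma~\ref{Le:Mon} (applied to $G$) its closure $\overline{S}$ in $G_p^\wedge$ is countable. Because $\varphi^\wedge$ is a topological embedding, $\varphi^\wedge\bigl(\overline{S'}^{(G')_p^\wedge}\bigr)$ equals the closure of $S$ in the subspace $\varphi^\wedge((G')^\wedge)$, hence is contained in $\overline{S}$ and therefore countable. Injectivity of $\varphi^\wedge$ then yields that $\overline{S'}^{(G')_p^\wedge}$ itself is countable, and Lemma~\ref{Le:Mon} gives ORC for $G'$.

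I do not expect any genuine obstacle here: the only step that needs a moment of care is verifying that $\varphi^\wedge$ is a topological embedding with respect to the pointwise topologies (as opposed to, say, the compact-open topologies, where one would have to worry about images of compact sets). Once that is noted, the proof is a transparent transfer of the countability condition along $\varphi^\wedge$. In particular, note that we do not need $\varphi^\wedge((G')^\wedge)$ to be closed in $G_p^\wedge$; only the embedding property is used.
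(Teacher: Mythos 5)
Your proposal is correct and follows essentially the same route as the paper: both arguments rest on the observation that the dual homomorphism $\varphi^\wedge\colon (G')_p^\wedge\to G_p^\wedge$ is a topological isomorphism onto a subgroup of $G_p^\wedge$, and then apply the dual characterization of ORC from Lemma~\ref{Le:Mon} twice to transfer the ``countable sets have countable closures'' property from $G_p^\wedge$ to its subspace $\varphi^\wedge((G')^\wedge)\cong (G')_p^\wedge$. Your explicit verification of the embedding via the basic neighborhoods $U(\cdot)$, and the remark that closedness of the image is not needed, are just slightly more detailed versions of what the paper asserts in one line.
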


\begin{proof}
Let $f\colon G\to H$ be a continuous group homomorphism of $G$
onto $H$, where $G$ is precompact and satisfies ORC. The dual
homomorphism $f^\wedge\colon H_p^\wedge\to G_p^\wedge$
is a topological isomorphism of $H_p^\wedge$ onto a subgroup of
$G_p^\wedge$. Since $G$ satisfies ORC, Lemma~\ref{Le:Mon} implies
that the closure of every countable set in $G_p^\wedge$ is countable.
Hence the groups $f^\wedge(H_p^\wedge)$ and $H_p^\wedge$ have
the same property. Applying Lemma~\ref{Le:Mon} once again, we conclude
that $H$ satisfies ORC.
\end{proof}

In the next theorem we present conditions on a precompact group $G$
under which the dual group $G^\wedge$ does not contain infinite
compact subsets.

\begin{theorem}\label{Th:2}
Let $G$ be a precompact Baire group satisfying the open refinement
condition. Then the dual group $G_p^\wedge$ does not contain
infinite compact subsets.
\end{theorem}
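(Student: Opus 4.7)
The plan is to argue by contradiction. Suppose that $K\sub G_p^\wedge$ is an infinite compact subset. I will produce a non-trivial convergent sequence inside $G_p^\wedge$, which directly contradicts Corollary~\ref{cor:Si} (since $G$ is precompact and Baire).

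First, pick any countably infinite subset $S\sub K$. Because $G$ satisfies ORC, Lemma~\ref{Le:Mon} gives that the closure $C=\ovl{S}$ taken in $G_p^\wedge$ is countable. On the other hand, $G_p^\wedge$ is Hausdorff as a subspace of $\T^G$, so the compact set $K$ is closed in $G_p^\wedge$; hence $C\sub K$ and $C$ is compact. Thus $C$ is an infinite countable compact Hausdorff space.

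Next, I would show that any such space contains a non-trivial convergent sequence. In $C$, for each $p\in C$ the set $C\sm\{p\}$ is a countable union of closed singletons (Hausdorff implies $T_1$), so $\{p\}$ is a $G_\delta$. A standard fact about compact Hausdorff spaces promotes a $G_\delta$-point to a point of countable character: given $\{p\}=\bigcap_{n\in\om}U_n$ with each $U_n$ open, one inductively builds open neighborhoods $V_n$ of $p$ with $\ovl{V_{n+1}}\sub V_n\cap U_n$ using normality, and a straightforward compactness argument shows $\{V_n\}$ is a local base at $p$. So $C$ is first countable. Since $C$ is infinite and compact, it cannot be discrete, so it has a non-isolated point $p$; first countability at $p$ then produces a sequence of distinct points of $C$ converging to $p$.

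Finally, such a sequence sits inside $G_p^\wedge$ and is a non-trivial convergent sequence there, contradicting Corollary~\ref{cor:Si}. The only non-routine ingredient is the intermediate lemma that an infinite countable compact Hausdorff space has a non-trivial convergent sequence; the rest is a direct combination of ORC (through Lemma~\ref{Le:Mon}) with the Baire hypothesis (through Corollary~\ref{cor:Si}).
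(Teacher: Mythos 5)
Your proof is correct and follows essentially the same route as the paper: contradiction, a countably infinite subset $S$ of the compact set, Lemma~\ref{Le:Mon} to make its closure $C$ a countable infinite compact set, and then a non-trivial convergent sequence in $C$ contradicting Corollary~\ref{cor:Si}. The only cosmetic difference is that you justify the existence of the convergent sequence via $G_\delta$-points and first countability, whereas the paper simply notes that a countable compact Hausdorff space is metrizable; both are fine.
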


\begin{proof}
Suppose to the contrary that $G_p^\wedge$ contains an infinite
compact subset $F$. Let $S$ be a countable infinite subset of $F$.
By Lemma~\ref{Le:Mon}, the closure of $S$ in $G_p^\wedge$, say, $C$
is a countable infinite compact set. Hence $C$ is a non-discrete
metrizable space which must contain non-trivial convergent
sequences. The latter contradicts Corollary~\ref{cor:Si}.
\end{proof}

Here is one of the main results of the article.

\begin{theorem}\label{Th:MM}
Suppose that $G$ is a precompact Abelian group satisfying the open
refinement condition. If $G$ has the Baire property and contains no
infinite compact subsets, then $G$ is reflexive.
\end{theorem}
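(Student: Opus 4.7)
The plan is to package the theorem as a short combination of Corollary~\ref{Cor:CR} and Theorem~\ref{Th:2}, together with the elementary observation that, on a dual group, the compact-open topology reduces to the pointwise convergence topology whenever all compact subsets of the underlying group are finite. Concretely, the strategy is to identify $G^\wedge$ with $G_p^\wedge$ and $G^{\wedge\wedge}$ with $(G_p^\wedge)_p^\wedge$, and then invoke Corollary~\ref{Cor:CR} to conclude that the standard evaluation map $\alpha_G\colon G\to G^{\wedge\wedge}$ is a topological isomorphism.

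First I would argue that $G^\wedge$ and $G_p^\wedge$ coincide as topological groups. The compact-open topology on $G^\wedge$ has a local base at the identity consisting of sets of the form $\{\chi\in G^\wedge:\chi(K)\subseteq \T_+\}$, where $K$ runs through the compact subsets of $G$. By hypothesis every compact subset of $G$ is finite, so these neighborhoods are exactly the basic pointwise-convergence neighborhoods $U(g_1,\ldots,g_k)$. Hence $G^\wedge = G_p^\wedge$ topologically.

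Second, I would apply Theorem~\ref{Th:2}, whose hypotheses ($G$ precompact, Baire, and satisfying ORC) are precisely the standing assumptions of the theorem. That result says $G_p^\wedge$ contains no infinite compact subsets. Since we have just identified $G^\wedge$ with $G_p^\wedge$, the compact subsets of $G^\wedge$ are also all finite. Repeating the reasoning of the first step with $G^\wedge$ in the role of $G$, the compact-open topology on $G^{\wedge\wedge}$ coincides with the pointwise convergence topology, so $G^{\wedge\wedge}=(G_p^\wedge)_p^\wedge$ topologically.

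Finally, Corollary~\ref{Cor:CR} asserts that the canonical evaluation map $\alpha_G\colon G\to (G_p^\wedge)_p^\wedge$ is a topological isomorphism for every precompact Abelian group. Under the identification $(G_p^\wedge)_p^\wedge = G^{\wedge\wedge}$ just established, this becomes the statement that the usual evaluation $\alpha_G\colon G\to G^{\wedge\wedge}$ is a topological isomorphism, which is precisely reflexivity. There is essentially no obstacle in this final assembly; the real content has been isolated into Theorem~\ref{Th:2} (which in turn rests on Corollary~\ref{cor:Si} and the dual characterization of ORC in Lemma~\ref{Le:Mon}) and into the Comfort--Ross duality Corollary~\ref{Cor:CR}, and the rest is the standard observation relating compact-open and pointwise topologies in the absence of infinite compact sets.
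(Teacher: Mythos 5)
Your proposal is correct and follows exactly the same route as the paper: apply Theorem~\ref{Th:2} to see that the dual has no infinite compact subsets, observe that the compact-open and pointwise topologies then coincide on both $G^\wedge$ and $G^{\wedge\wedge}$, and conclude via Corollary~\ref{Cor:CR}. Your version merely spells out the identification of the two topologies in more detail than the paper does.
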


\begin{proof}
It follows from Theorem~\ref{Th:2} that $G_p^\wedge$ has no infinite
compact subsets. Hence the groups $G^\wedge$ and $G^{\wedge\wedge}$
carry the topology of pointwise convergence on elements of $G$ and
$G^\wedge$, respectively. The conclusion now follows from
Corollary~\ref{Cor:CR}.
\end{proof}

Our next aim is to show that the class of groups satisfying the
conditions of Theorem~\ref{Th:MM} is fairly wide. For example, it
contains all pseudocompact Abelian groups without infinite compact
subsets (see Corollary~\ref{Cor:SBB} below). To see how wide the
latter class is it suffices to refer to the following weaker version of
\cite[Theorem~5.8]{GM}: Under the Singular Cardinal Hypothesis,
$SCH$, every pseudocompact Abelian group admits another
pseudocompact Hausdorff topological group topology with no
infinite compact subsets. We recall that $SCH$ is the statement
consistent with $ZFC$ and saying that for every singular cardinal
$\kappa$, if $2^{cf(\kappa)}<\kappa$, then $\kappa^{cf(\kappa)}=\kappa^+$.
In particular, $SCH$ implies that if $\kappa>2^\omega$ is a cardinal
of countable cofinality, then $\kappa^\omega=\kappa^+$.

In fact, the class of groups satisfying the conditions of
Theorem~\ref{Th:MM} contains many precompact non-pseudocompact
groups as well---this follows from Theorem~\ref{Th:Qu} or
Proposition~\ref{Prop:Prod} given below. It is worth noting that the first
examples of reflexive precompact non-pseudocompact groups were
presented in \cite[Theorem~3.3]{ACDT}.

Since every continuous homomorphism of a pseudocompact group
onto a topological group with a countable base is open \cite{CR}, the
following fact is immediate:

\begin{prop}\label{Prop:SB}
Every pseudocompact group satisfies the open refinement condition.
\end{prop}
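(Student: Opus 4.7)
The plan is to observe that Proposition~\ref{Prop:SB} is an immediate reformulation of the Comfort--Ross open mapping theorem cited just before the statement: every continuous surjective homomorphism from a pseudocompact group onto a topological group with a countable base is open. Thus, given a pseudocompact group $G$ and a continuous homomorphism $f\colon G\to H$ onto a second countable group $H$, the homomorphism $f$ is itself open. The definition of ORC asks for a factorization $f=g\circ\pi$ where $\pi\colon G\to K$ is a continuous open homomorphism onto a second countable group $K$ and $g\colon K\to H$ is a continuous homomorphism; so I would simply take $K:=H$, $\pi:=f$, and $g:=\mathrm{id}_H$.

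With this choice each clause of the definition is verified trivially: $\pi=f$ is continuous, surjective, and open (by the Comfort--Ross theorem), and its codomain $K=H$ is second countable by hypothesis; the identity $g=\mathrm{id}_H$ is obviously a continuous group homomorphism; and $g\circ\pi=\mathrm{id}_H\circ f=f$ by construction. There is really no obstacle here, since ORC is formulated precisely so as to record the factorization that the Comfort--Ross theorem already produces in the pseudocompact setting; the content of the proposition lies entirely in the invocation of that cited theorem, and the proof is a one-line verification.
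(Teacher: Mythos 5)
Your proof is correct and is exactly the paper's argument: the authors also deduce the proposition immediately from the Comfort--Ross theorem that every continuous homomorphism of a pseudocompact group onto a second countable group is open, which yields the trivial factorization $f=\mathrm{id}_H\circ f$ witnessing ORC. Nothing further is needed.
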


We can now give an alternative proof of \cite[Theorem~2.2]{ACDT}
(see also \cite[Theorem~6.1]{GM}):

\begin{coro}\label{Cor:SBB}
Every pseudocompact Abelian group without infinite compact subsets
is reflexive.
\end{coro}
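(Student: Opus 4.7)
The plan is to derive this as an immediate consequence of Theorem~\ref{Th:MM}, by verifying that every pseudocompact Abelian group satisfies the three hypotheses on $G$ required there: precompactness, the Baire property, and ORC.

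First I would recall that, by the Comfort--Ross theorem (cited as \cite[Theorem~1.1]{CR} in the introduction), every pseudocompact topological group is precompact; this takes care of the precompactness hypothesis. Second, pseudocompact Tychonoff spaces are Baire: any countable family of open dense subsets of a pseudocompact space has dense intersection, which is a standard fact (if the intersection missed a point, one could construct, via the usual shrinking argument, a locally finite infinite family of nonempty open sets supporting an unbounded continuous real-valued function, contradicting pseudocompactness). This gives the Baire property. Third, Proposition~\ref{Prop:SB} directly yields ORC for pseudocompact groups.

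With all three hypotheses of Theorem~\ref{Th:MM} verified, and the additional assumption of the corollary being precisely the absence of infinite compact subsets, Theorem~\ref{Th:MM} applies and gives reflexivity of $G$.

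I do not expect any real obstacle here: each ingredient is either already proved in the paper (Proposition~\ref{Prop:SB} and Theorem~\ref{Th:MM}) or a standard fact cited in the introduction (precompactness and the Baire property of pseudocompact groups). The only mildly non-routine point is the Baire property for pseudocompact spaces, but this is classical and can simply be quoted. Thus the proof is essentially a one-line reduction to Theorem~\ref{Th:MM}.
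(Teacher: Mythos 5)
Your proposal is correct and follows essentially the same route as the paper: verify the Baire property and ORC (via Proposition~\ref{Prop:SB}) for pseudocompact groups and invoke Theorem~\ref{Th:MM}. The only cosmetic difference is the justification of the Baire property, where the paper cites $G_\delta$-density of a pseudocompact space in its Stone--\v{C}ech compactification rather than sketching the shrinking argument.
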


\begin{proof}
Every pseudocompact space $X$ has the Baire property since it meets
each non-empty $G_\delta$-set in the Stone--\v{C}ech
compactification $\beta{X}$ of $X$ (see \cite[3.10.F]{Eng}). By
Proposition~\ref{Prop:SB}, every pseudocompact topological group
satisfies ORC. Therefore, Theorem~\ref{Th:MM} implies the required
conclusion.
\end{proof}

\section{The Baire property  and the open refinement
condition}\label{Sec:ORP}
To present a series of reflexive precompact groups in
Section~\ref{Sec:4}, we need several facts about precompact groups
with the Baire property and/or satisfying ORC. In the lemma below we
establish that the Baire property is preserved under extensions of
topological groups by second countable groups. This result complements
the fact that the product $X\times Y$ of two Baire spaces is also Baire
provided one of the factors is second countable (see \cite{Ox}).

\begin{lemma}\label{Le:Ext}
Let $K$ be a closed second countable subgroup of a topological group
$G$ (not necessarily Abelian). If the coset space $G/K$ and the
group $K$ are Baire, so is $G$.
\end{lemma}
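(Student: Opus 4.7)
The plan is to use a Kuratowski--Ulam style splitting between the base $G/K$ and the fibers of the quotient map $\pi\colon G\to G/K$. Given a countable family $\{U_n : n\in\om\}$ of open dense subsets of $G$ and a non-empty open $O\sub G$, the goal is to produce a coset $gK$ (with $g\in O$) such that every $U_n$ meets $gK$ in a dense subset of $gK$; once this is achieved, the fact that $gK$ is homeomorphic to the Baire group $K$ will allow the Baire property, applied inside the non-empty open set $O\cap gK$, to deliver a point of $O\cap gK\cap\bigcap_n U_n$, which is what is required.

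The key technical step is the following Kuratowski--Ulam type lemma: for every closed nowhere dense $F\sub G$, the set
\[
E_F = \{x\in G/K : F\cap\pi^{-1}(x)\text{ has non-empty interior in }\pi^{-1}(x)\}
\]
is meager in $G/K$. To prove it I would fix a countable base $\{B_i : i\in\om\}$ for the second countable group $K$; the presence of a non-empty interior of $F\cap\pi^{-1}(x)$ in the fiber $\pi^{-1}(x)=gK$ is detected by some inclusion $g'B_i\sub F$ with $g'\in gK$ and $i\in\om$, so $E_F=\bigcup_{i\in\om}\pi(H_i)$ with $H_i = \{g\in G : gB_i\sub F\} = \bigcap_{b\in B_i}Fb^{-1}$. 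Each $H_i$ is closed in $G$, and nowhere dense, because a non-empty open $W\sub H_i$ would force the open set $WB_i$ to lie inside $F$, contradicting $F$ being nowhere dense. The main obstacle is then to show that $\pi(H_i)$ is meager in $G/K$, since the image of a nowhere dense set under an open continuous map need not be nowhere dense in general. To get around this I would use the separability of $K$: choosing a countable dense subset $D\sub K$, the set $H_iD=\bigcup_{d\in D}H_id$ is a countable union of closed nowhere dense right translates of $H_i$ (hence meager in $G$) and is dense in the saturation $H_iK=\pi^{-1}(\pi(H_i))$; combining this with the openness of $\pi$ and the correspondence between $\pi$-saturated open subsets of $G$ and open subsets of $G/K$ should yield that $\pi(H_i)$ lies in a meager subset of $G/K$.

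Once the key lemma is established the conclusion is quick. Applying it to each $F_n := G\sm U_n$ shows that $D_n = \{x\in G/K : U_n\cap\pi^{-1}(x)\text{ is dense in }\pi^{-1}(x)\}$ is comeager in $G/K$. Since $G/K$ is Baire and $\pi(O)$ is a non-empty open subset of $G/K$, the set $\pi(O)\cap\bigcap_n D_n$ is non-empty; pick $x$ in this intersection and any $g\in O$ with $\pi(g)=x$. By the choice of $x$, each $U_n\cap gK$ is open dense in $gK$, while $O\cap gK$ is non-empty open in $gK$. Since $gK$ is Baire (being homeomorphic to $K$), this yields $O\cap gK\cap\bigcap_n U_n\neq\emp$, so $\bigcap_n U_n$ meets the arbitrary non-empty open set $O$, and $G$ has the Baire property.
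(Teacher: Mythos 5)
Your overall architecture matches the paper's: reduce to a fibrewise Kuratowski--Ulam statement, use the Baire property of $G/K$ to locate a coset $gK$ meeting $O$ on which every $U_n$ is relatively dense, then finish inside that coset using the Baire property of $K$. The reduction $E_F=\bigcup_i\pi(H_i)$ with $H_i=\{g: gB_i\sub F\}$ is correct, as is the observation that each $H_i$ is closed and nowhere dense in $G$. The trouble is exactly the step you flag as the main obstacle: your argument that $\pi(H_i)$ is meager in $G/K$ does not go through. Knowing that $H_iD$ is meager in $G$ and dense in the saturation $H_iK=\pi^{-1}(\pi(H_i))$ gives no control on the category of $\pi(H_i)$: in a separable ambient group every set contains a dense meager (even countable) subset, so ``contains a dense meager set'' carries no information, and a meager set can be dense in a saturated set whose image is all of $G/K$. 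Indeed, if your reasoning used only ``$H_i$ is closed and nowhere dense'' together with separability of $K$, it would show that the $\pi$-image of \emph{every} closed nowhere dense set is meager, which is false: for $G=\R^2$ and $K=\{0\}\times\R$, the graph of a continuous function is closed and nowhere dense, its translates by a countable dense subset of $K$ form a meager set dense in its saturation $\R^2$, yet its projection is all of $\R$.

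What is missing is precisely where the special form of $H_i$ must be used, and it is the heart of the paper's argument. Since $H_iB_i\sub F$ and the second countable group $K$ can be written as $K=B_iC_i$ for a countable set $C_i\sub K$, the \emph{entire} fibre over any point of $\pi(H_i)$ is swallowed: $\pi^{-1}(\pi(H_i))=H_iK=H_iB_iC_i\sub FC_i=\bigcup_{c\in C_i}Fc$, a countable union of nowhere dense sets. (The paper phrases this with a neighbourhood base $\{U_n\}$ at the identity of $K$ and countable sets $C_n$ with $K=U_nC_n$, obtaining $\pi^{-1}(P_{n,k})\sub F_kC_n$ for its bad sets $P_{n,k}$, which are your $\pi(H_i)$.) It is from the meagerness in $G$ of the full saturation $\pi^{-1}(\pi(H_i))$ --- not from the meagerness of the dense subset $H_iD$ --- that the paper concludes that $\pi(H_i)$ is of first category in $G/K$. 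Your proposal stops short of this inclusion, so the key lemma, and with it the proof, is not established as written.
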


\begin{proof}
Denote by $\pi$ the natural projection of $G$ onto the left coset
space $H=G/K$. Suppose that $\{F_k: k\in\om\}$ is a sequence of
closed nowhere dense subsets of $G$. We have to show that the
complement $G\sm F$ is dense in $G$, where $F=\bigcup_{k\in\om}
F_k$.

Let $\{U_n: n\in\om\}$ be a countable local base at the neutral
element of $K$. Given integers $n,k\in\om$, we put
\[
P_{n,k}=\{h\in H: xU_n\sub F_k\cap\pi^{-1}(h),\ \mbox{for\ some}\
x\in\pi^{-1}(h)\}.
\]
Let us show that each $P_{n,k}$ is nowhere dense in $H$. Since the
group $K$ is second countable, we can find, for every $n\in\om$, a
countable set $C_n\sub K$ such that $K=U_nC_n$. We claim that
$\pi^{-1}(P_{n,k})\sub F_kC_n$, for all $n,k\in\om$. Indeed, if
$h\in P_{n,k}$ then $xU_n\sub F_k\cap\pi^{-1}(h)$, for some
$x\in\pi^{-1}(h)$. Then $\pi(x)=h$ and we have that
\[
F_kC_n\supseteq xU_nC_n=xK=\pi^{-1}(h).
\]
This implies the inclusion $\pi^{-1}(P_{n,k})\sub F_kC_n$. Since
$F_kC_n$ is a first category set in $G$, we conclude that so are
the set $\pi^{-1}(P_{n,k})$ in $G$ and its image $P_{n,k}$ in $H$,
respectively.

Let $U$ be a non-empty open subset of $G$. Then $V=\pi(U)$ is open
in $H$ and, since $P=\bigcup_{n,k\in\om}P_{n,k}$ is a first category
set in the Baire space $H$, there exists a point $y\in V\sm P$.
Choose $x\in U$ with $\pi(x)=y$. Then $U\cap xK$ is a
non-empty open subset of $xK$. It follows from our definition of the
sets $P_{n,k}$ and $P$ and the choice of the elements $y\in V$ and
$x\in U$ that $xK\cap F_k$ is a nowhere dense subset of $xK$, for
each $k\in\om$. Indeed, otherwise one can find $x'\in xK$ and
$k,n\in\omega$ such that $x'U_n\sub F_k\cap xK$. Then $y=\pi(x)
=\pi(x')\in P_{k,n}$, thus contradicting our choice of $y$.

We conclude, therefore, that $xK\cap F$ is a first category set in $xK$.
Since $K$ is Baire and $xK\cong K$, the complement $(xK\cap U)\sm F$
is non-empty. Thus, $U\sm F\neq\emp$, which finishes the proof of the
lemma.
\end{proof}

\begin{lemma}\label{Le:HM}
The class of precompact Baire groups is closed with respect
to taking continuous homomorphic images.
\end{lemma}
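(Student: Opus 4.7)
The plan is to lift $f$ to the Raikov completions, where the extended map becomes open, and then transfer density back and forth between $G$ and $\varrho G$ (respectively $H$ and $\varrho H$). Let $f\colon G\to H$ be a continuous surjective homomorphism with $G$ precompact and Baire. Precompactness of $H$ is automatic: $H=f(G)$ is totally bounded since $G$ is. By the extension property of the Raikov completion, $f$ extends to a continuous homomorphism $\widetilde f\colon \varrho G\to \varrho H$. Since $\varrho G$ is compact, $\widetilde f(\varrho G)$ is a compact (hence closed) subgroup of $\varrho H$ that contains $f(G)=H$, which is dense in $\varrho H$; therefore $\widetilde f$ is surjective. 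A continuous surjective homomorphism between compact Hausdorff groups is automatically open (the induced map $\varrho G/\ker\widetilde f\to \varrho H$ is a continuous bijection between compact Hausdorff spaces, hence a homeomorphism), so $\widetilde f$ is open.

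Next, given any sequence $\{V_n\}_{n\in\om}$ of dense open subsets of $H$, I would write $V_n=W_n\cap H$ for some open $W_n\sub \varrho H$. Since $V_n$ is dense in $H$ and $H$ is dense in $\varrho H$, the set $V_n$, and therefore the larger set $W_n$, is dense in $\varrho H$. Because $\widetilde f$ is open, continuous and surjective, preimages of open dense sets are open and dense, so each $\widetilde f^{-1}(W_n)$ is open and dense in $\varrho G$. Since $G$ is dense in $\varrho G$, the intersection of any open dense subset of $\varrho G$ with $G$ is open and dense in $G$; hence each $\widetilde f^{-1}(W_n)\cap G$ is open and dense in $G$. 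Applying the Baire property of $G$, the countable intersection
\[
A=\bigcap_{n\in\om}\bigl(G\cap \widetilde f^{-1}(W_n)\bigr)=G\cap \widetilde f^{-1}\Bigl(\bigcap_{n\in\om} W_n\Bigr)
\]
is dense in $G$. Since $f$ is a continuous surjection it maps dense subsets of $G$ to dense subsets of $H$; as $f(A)\sub H\cap \bigcap_n W_n=\bigcap_n V_n$, one concludes that $\bigcap_n V_n$ is dense in $H$, so $H$ is Baire.

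The main (mild) obstacle is just keeping the three density transfers straight: from $V_n\sub H$ up to $W_n\sub \varrho H$, across the open map $\widetilde f$ back to $\varrho G$, and then down into $G$ via the density of $G$ in $\varrho G$. The only non-routine ingredient is the openness of $\widetilde f$, which is a standard fact about compact group homomorphisms, together with the preservation of denseness under continuous open surjections. Everything else is a bookkeeping exercise combining these density transfers with the Baire property of $G$; in particular, neither commutativity nor any ORC-type hypothesis is needed for this lemma.
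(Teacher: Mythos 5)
Your proof is correct and follows essentially the same route as the paper's: both extend $f$ to an open continuous surjection $\varrho G\to\varrho H$ of compact groups and transfer category information through the completions. The only difference is cosmetic --- you phrase the argument via intersections of dense open sets, while the paper phrases it via coverings by nowhere dense sets and a contradiction.
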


\begin{proof}
Suppose that $f\colon G\to H$ is a continuous surjective homomorphism
of precompact topological groups, where $G$ is Baire. Denote by $g$
an extension of $f$ to a continuous homomorphism of $\varrho{G}$
to $\varrho{H}$. Since the groups $\varrho{G}$ and $\varrho{H}$ are
compact, $g$ is an open surjection. Suppose to the contrary that $H$
can be covered by countably many nowhere dense sets, say,
$H=\bigcup_{n\in\om}C_n$. Clearly, $G\sub\bigcup_{n\in\om}F_n$,
where $F_n=g^{-1}(C_n)$ for each $n\in\om$. Since the homomorphism
$g$ is open, the sets $F_n$ are nowhere dense in $\varrho{G}$.
However, the density of $G$ in $\varrho{G}$ implies that each
$F_n\cap G$ is nowhere dense in $G$, which contradicts the
Baire property of $G$.
\end{proof}

The following result is a kind of a \textit{reflection principle\/} for
the Baire property in the class of precompact groups. It reduces
the problem of whether a given (non-metrizable) precompact group
is Baire to the verification of whether all second countable continuous
homomorphic images of the group are Baire.

\begin{prop}\label{SC}
The following conditions are equivalent for a precompact topological
group $G$ $\mathrm{(}$not necessarily Abelian$\mathrm{)\hskip-3pt
:}$
\begin{itemize}
\item[{\rm a)}] $G$ is Baire;
\item[{\rm b)}] $G$ is non-meager in its completion $\varrho{G}$;
\item[{\rm c)}] every second countable continuous homomorphic
image $H$ of $G$ is Baire.
\end{itemize}
\end{prop}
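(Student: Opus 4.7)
The plan is to prove the cycle (a)$\Leftrightarrow$(b), (a)$\Rightarrow$(c), and (c)$\Rightarrow$(a).

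For (a)$\Leftrightarrow$(b), I would first exploit that $G$ is dense in $\varrho G$: for any $A \sub G$, $\overline A^G = \overline A^{\varrho G} \cap G$, from which it follows that $A$ is nowhere dense in $G$ iff nowhere dense in $\varrho G$. Consequently $G$ is meager in itself iff $G$ is meager in $\varrho G$. I would then combine this with the classical principle that an $\om$-narrow topological group (and precompact groups are $\om$-narrow) is Baire iff it is non-meager in itself. The nontrivial direction uses that if some meager open set existed, we could translate it to a meager open nbhd $U$ of the identity, then cover $G = UC$ for some countable $C$ by $\om$-narrowness, making $G$ a countable union of meager sets and hence meager; by homogeneity of translations in a topological group this forces every nonempty open set to be non-meager.

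The implication (a)$\Rightarrow$(c) is immediate from Lemma~\ref{Le:HM}, since a second countable continuous homomorphic image is in particular a continuous homomorphic image.

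For (c)$\Rightarrow$(a) I would argue the contrapositive: assume $G$ is meager in $\varrho G$, so there is a dense $G_\delta$ set $D = \bigcap_n U_n$ in $\varrho G$ (with $U_n$ open dense) with $D \cap G = \emp$. Writing the compact group $\varrho G = \varprojlim K_\alpha$ as an inverse limit of second countable compact groups via continuous open surjections $\pi_\alpha$ (indexed by a directed set closed under countable suprema), the strategy is to find a single $\alpha$ and open dense sets $W_n' \sub K_\alpha$ with $\pi_\alpha^{-1}(W_n') \sub U_n$; then $V = \bigcap_n W_n'$ is a dense $G_\delta$ of $K_\alpha$ by the Baire property of the second countable compact $K_\alpha$, the inclusion $\pi_\alpha^{-1}(V) \sub D$ together with $G \cap D = \emp$ yields $H_\alpha \cap V = \emp$ (where $H_\alpha = \pi_\alpha(G)$), making the second countable homomorphic image $H_\alpha$ meager in $K_\alpha$ and hence not Baire, contradicting (c). To produce such $W_n'$, I would first show that for each $n$ there exists $\alpha_n$ and open dense $W_n \sub K_{\alpha_n}$ with $\pi_{\alpha_n}^{-1}(W_n) \sub U_n$, i.e., that the closed nowhere dense complement $F_n = \varrho G \sm U_n$ has nowhere dense image in some second countable quotient; then taking $\alpha \geq \alpha_n$ for all $n$, the pullbacks $W_n' = p_{\alpha\alpha_n}^{-1}(W_n)$ are open dense in $K_\alpha$ since the bonding maps are continuous open surjections.

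The main obstacle is the technical claim that each closed nowhere dense set in the compact group $\varrho G$ admits a second countable quotient in which its image is nowhere dense. This is the heart of the argument and I expect it to rest on a Kuratowski--Ulam-type analysis of the open continuous surjections $\pi_\alpha$ with compact fibers, using that $\varrho G$ is recoverable as the inverse limit of its second countable quotients and that closed sets can be approximated from outside by cylindrical open sets via compactness and normality.
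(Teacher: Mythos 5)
Your equivalence a)$\Leftrightarrow$b) is correct and slightly cleaner than the paper's (the paper argues directly with extensions of open dense sets and precompactness; your route via ``nowhere dense in a dense subspace iff nowhere dense in the ambient space'' plus the $\om$-narrow translation argument is a valid repackaging). The implication a)$\Rightarrow$c) via Lemma~\ref{Le:HM} is exactly what the paper does. The problem is in c)$\Rightarrow$a).

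There you reduce everything to the claim that every closed nowhere dense set $F$ in the compact group $\varrho{G}$ has nowhere dense image in \emph{some} second countable quotient, you correctly flag this as the heart of the matter --- and then you do not prove it, and the tools you point at would not prove it. Approximating $F$ \emph{from outside} by cylindrical open sets goes the wrong way: an open set containing $F$ has open, hence somewhere dense, image in every quotient, so this gives no control on $\pi(F)$. A Kuratowski--Ulam analysis of a \emph{fixed} open surjection $\pi_\alpha$ also cannot work, because for a fixed metrizable quotient the image of a closed nowhere dense set can be everything (take $F$ to be the graph of a continuous section over $K_\alpha$ inside a product decomposition); the quotient must be chosen \emph{depending on} $F$, and Kuratowski--Ulam offers no mechanism for that choice. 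The missing idea is countable cellularity: the dense open complement $U=\varrho{G}\sm F$ is a union of ``standard'' (cylindrical) open sets, i.e.\ preimages of open sets under continuous homomorphisms onto second countable groups; since a precompact group is ccc, countably many of them have union $W$ dense in $U$, hence dense in $\varrho{G}$; the diagonal product of the countably many witnessing homomorphisms gives a single second countable quotient $\pi$ with $W=\pi^{-1}(O)$, $O$ open dense, and then $\pi(F)\cap O=\emp$ because $W\sub U$ is a full union of fibers, so $\pi(F)$ is nowhere dense. This is precisely what the paper's proof of c)$\Rightarrow$a) does (working in $G$ itself with first category sets rather than in $\varrho{G}$, which is only a cosmetic difference from your setup). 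Once that claim is in place, the rest of your inverse-limit bookkeeping --- passing to a common refinement $\alpha\geq\alpha_n$ and pulling the dense open sets back along the open bonding maps --- is sound and yields the contradiction you want.
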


\begin{proof}
The equivalence a)~$\Leftrightarrow$~b) is almost immediate. Indeed,
suppose that $G$ can be covered by a countable family $\{F_n:
n\in\om\}$ of nowhere dense subsets of the group $\varrho{G}$. We
can assume without loss of generality that each $F_n$ is closed in
$\varrho{G}$. Then $U_n=\varrho{G}\sm F_n$ is a dense open subset of
$\varrho{G}$ and, since $G$ is dense in $\varrho{G}$, $V_n=G\cap
U_n$ is a dense open subset of $G$, for each $n\in\om$. It follows
from our definition of the sets $V_n$ that $\bigcap_{n\in\om}V_n=
\emp$, so $G$ fails to be Baire.

Conversely, suppose that $G$ is non-meager in $\varrho{G}$, and
consider a sequence $\{V_n: n\in\om\}$ of open dense subsets of $G$.
For every $n\in\om$, choose an open set $U_n$ in $\varrho{G}$ such
that $U_n\cap G=V_n$. Then $F_n=\varrho{G}\sm U_n$ is a closed
nowhere dense subset of $\varrho{G}$, so the set $P=G\sm
\bigcup_{n\in\om}F_n$ is non-empty. We claim that $P$ is dense in
$G$. If not, there exists a non-empty open set $W$ in $G$ disjoint
from $P$. Since $G$ is precompact, we can find a finite set $C\sub
G$ such that $G=WC$. It follows from our choice of $W$ that
$W\sub\bigcup_{n\in\om}F_n$, i.e., $W$ is of the first category in
$\varrho{G}$. Hence $G=WC$ is also of the first category in
$\varrho{G}$, thus contradicting our assumption about $G$.\smallskip

The implication b)~$\imp$~c) follows from Lemma~\ref{Le:HM} and the
equivalence of a) and b).\smallskip

It remains to verify that c) implies a). To this end, it suffices to
prove that for every first category set $S$ in $G$, there exists a
continuous homomorphism $f\colon G\to H$ onto a second countable
group $H$ such that $f(S)$ is of the first category in $H$.

We call an open subset $U$ of $G$ \textit{standard\/} if one can
find a continuous homomorphism $p\colon G\to H$ onto a second
countable group $H$ and an open set $V\sub H$ such that
$U=p^{-1}(V)$. Since $G$ is precompact, the standard open sets
constitute a base for $G$. It is almost immediate from the
definition that the family of standard open sets in $G$ is closed
with respect to taking countable unions.

Let $\{F_n: n\in\om\}$ be a sequence of nowhere dense subsets of
$G$. For every $n\in\om$, let $\gamma_n$ be the family of standard
open sets $U$ in $G$ such that $U\cap F_n= \emp$. Then $U_n=
\bigcup\gamma_n$ is a dense open subset of $G$ disjoint from $F_n$.
Clearly, $G$ is a dense subgroup of the compact group $\varrho{G}$,
whence it follows that $G$ has countable cellularity. Hence, for
every $n\in\om$, the family $\gamma_n$ contains a countable
subfamily $\lambda_n$ such that $\bigcup\lambda_n$ is dense in
$U_n$. Then $W_n=\bigcup\lambda_n$ is a standard open set in $G$,
for each $n\in\om$. Let a continuous homomorphism $f_n\colon G\to
H_n$ onto a second countable group $H_n$ witness that the set $W_n$
is standard, where $n\in\om$. Taking the diagonal product of the
homomorphisms $f_n$, we obtain a continuous homomorphism $f\colon
G\to H$ onto a second countable group $H$ and a family $\{O_n:
n\in\om\}$ of open sets in $H$ such that $W_n=f^{-1}(O_n)$, for each
$n\in\om$. Since each $W_n$ is dense in $G$ and $O_n=f(W_n)$, we
conclude that the sets $O_n$ are dense $H$. It also follows from the
definition of $W_n$ and the choice of $O_n$ that $f(F_n)\cap
O_n=\emp$, i.e., $f(F_n)$ is nowhere dense in $H$. Hence
$f(\bigcup_{n\in\om}F_n)$ is a first category set in $H$, as
required.
\end{proof}

It is well known that the product of two Baire spaces can fail
to be Baire (see \cite{Co,FK}). Even the product of two linear
normed spaces with the Baire property need not have it, as
M.~Valdivia showed in \cite{Val} (see also \cite{MP}). It turns
out that in the class of precompact groups, the Baire property
becomes productive.

\begin{theorem}\label{Prop:Pro}
The class of precompact Baire groups is closed under formation
of arbitrary direct products.
\end{theorem}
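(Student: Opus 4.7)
My plan is to reduce the theorem to the case of a countable index set by factoring second countable quotients through countable subproducts, and then to handle the countable case by a Kuratowski--Ulam-style argument in the compact completion.

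Let $G=\prod_{i\in I}G_i$ with each $G_i$ precompact and Baire. By Proposition~\ref{SC}, it suffices to show that every continuous surjective homomorphism $f\colon G\to H$ onto a second countable group $H$ has $H$ Baire. Fix a countable neighborhood base $\{U_k\}$ of $e_H$; since $f^{-1}(U_k)$ is open in the product topology, it contains a basic identity neighborhood depending on only finitely many coordinates $F_k\sub I$. Setting $J=\bigcup_kF_k$ (countable), one checks that $f$ is trivial on the canonical copy of $G_i$ in $G$ for every $i\notin J$. Hence $f=\bar f\circ\pi_J$, where $\pi_J\colon G\to G_J:=\prod_{i\in J}G_i$ is the canonical surjective projection and $\bar f\colon G_J\to H$ is a continuous surjection. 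By Lemma~\ref{Le:HM}, $H$ is Baire provided $G_J$ is, which reduces the problem to the case $|I|\leq\om$.

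For the countable case, write $G=\prod_{n\in\om}G_n$ and $K=\vro G=\prod_n\vro G_n$, which is compact. By Proposition~\ref{SC}~(b), $G$ is Baire iff it is non-meager in $K$, and each $G_n$ is non-meager in $\vro G_n$. Suppose for contradiction that $G\sub\bigcup_mF_m$ with each $F_m$ closed nowhere dense in $K$. Splitting $K=\vro G_0\times K_{\geq1}$ with $K_{\geq1}=\prod_{n\geq1}\vro G_n$, a Kuratowski--Ulam-type argument shows that for each $m$ the set $E_m=\{y\in K_{\geq1}:(F_m)^y\text{ has non-empty interior in }\vro G_0\}$ is nowhere dense in $K_{\geq1}$. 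Hence $\bigcup_mE_m$ is meager in $K_{\geq1}$; combining this with the analogous assertion that the tail $\prod_{n\geq1}G_n$ is non-meager in $K_{\geq1}$, one selects $y\in\prod_{n\geq1}G_n$ so that every $(F_m)^y$ is nowhere dense in $\vro G_0$. Then $G_0\sub\bigcup_m(F_m)^y$ would be meager in $\vro G_0$, contradicting the non-meagerness of $G_0$.

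The main obstacle is the Kuratowski--Ulam step, which classically requires a countable $\pi$-base on one factor, and compact Hausdorff groups generally lack such a base. I would bypass this either by invoking the countable cellularity of compact groups together with the Banach category theorem, or by reducing to compact metrizable (Lie) quotients via the inverse-limit structure $\vro G_n=\varprojlim_\alpha L_{n,\alpha}$, transferring Baire property to the quotients $G_n\to L_{n,\alpha}$ via Lemma~\ref{Le:HM}, and applying the classical Kuratowski--Ulam theorem there. A secondary issue is that the argument invokes a tail $\prod_{n\geq1}G_n$ which is itself a countable product of precompact Baire groups, seemingly requiring the conclusion we are proving; this circularity is best avoided by recasting the argument as a direct coordinate-wise construction of a point of $G$ avoiding every $F_m$, using only the non-meagerness of each individual $G_n$.
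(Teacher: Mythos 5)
Your reduction to a countable index set is correct (and harmless, though the paper's argument does not need it), but the countable case is where the proof actually has to happen, and there you have a plan with two holes that you yourself flag and do not close. The Kuratowski--Ulam step genuinely fails as stated: the sections $(F_m)^y$ live in $\varrho{G}_0$, so the classical theorem requires a countable $\pi$-base on $\varrho{G}_0$, and a compact group has a countable $\pi$-base only if it is metrizable; countable cellularity plus the Banach category theorem is not known to rescue Kuratowski--Ulam here (the theorem can fail for products with a non-separable factor). Your second bypass --- passing to metrizable quotients $L_{n,\alpha}$ --- is the right instinct but is left entirely unexecuted, and the delicate point is that the quotient must be chosen \emph{adapted to the given meager sets}, since images of nowhere dense sets under quotient maps need not be nowhere dense. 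The circularity involving the tail product $\prod_{n\geq 1}G_n$ is a further unresolved defect, and the proposed "direct coordinate-wise construction" is precisely the content of Oxtoby's theorem, not something one gets for free.

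The paper's proof avoids all of this by staying at the level of Proposition~\ref{SC}(c): given $f\colon G\to H$ onto a second countable group $H$, extend it to $\varrho{f}\colon\prod_{i\in I}\varrho{G}_i\to\varrho{H}$; its kernel is a $G_\delta$-set, hence contains a product $\prod_{i\in I}N_i$ of closed invariant $G_\delta$-subgroups; each quotient $\varrho{G}_i/N_i$ is compact metrizable, so $K_i=\pi_i(G_i)$ is second countable and Baire by Lemma~\ref{Le:HM}; then $\prod_{i\in I}K_i$ is Baire by Oxtoby's theorem on products of second countable Baire spaces, and $H$ is a continuous homomorphic image of this product, hence Baire by Lemma~\ref{Le:HM} again. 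The Kuratowski--Ulam difficulty is thereby delegated to a setting where every factor has a countable $\pi$-base. If you want to salvage your approach, the missing idea is exactly this: use the second countability of the target $H$ to produce, coordinate by coordinate, second countable Baire quotients through which $f$ factors, rather than attempting a category argument directly in the non-metrizable completion.
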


\begin{proof}
Let $G=\prod_{i\in I} G_i$ be a product of precompact Baire groups.
By Proposition~\ref{SC}, it suffices to verify that for every continuous
homomorphism $f\colon G\to H$ onto a second countable group $H$,
the image $H$ is Baire.

Extend $f$ to a continuous homomorphism $\varrho{f}\colon\prod_{i\in
I}\varrho{G}_i\to\varrho{H}$ and denote by $N$ the kernel of
$\varrho{f}$. Since the groups $H$ and $\varrho{H}$ are first
countable, $N$ is a $G_\delta$-set in the compact group $\varrho{G}=
\prod_{i\in I}\varrho{G}_i$. Hence we can find, for every $i\in I$,
a closed invariant subgroup $N_i\sub\varrho{G}_i$ of type $G_\delta$
in $\varrho{G}_i$ such that $\prod_{i\in I}N_i\sub N$. Let
$\pi_i\colon\varrho{G}_i\to\varrho{G}_i/N_i$ be the quotient
homomorphism, $i\in I$. Then the quotient group $\varrho{G}_i/N_i$
is compact and metrizable, so the subgroup $K_i=\pi_i(G_i)$ of
$\varrho{G}_i/N_i$ has a countable base. It follows from
Lemma~\ref{Le:HM} that the group $K_i$ is Baire.

Let $\pi=\prod_{i\in I}\pi_i$ be the product of the homomorphisms
$\pi_i$'s. Clearly, the homomorphism $\pi\colon\varrho{G}\to
\prod_{i\in I}\varrho{G}_i/N_i$ is continuous and surjective, while
$\prod_{i\in I}N_i$ is the kernel of $\pi$. Since $\varrho{G}$ is a
compact group, the homomorphism $\pi$ is open. It follows from the
inclusion $\ker\pi\sub N=\ker\varrho{f}$ that there exists a
homomorphism $\varphi\colon\prod_{i\in I}\varrho{G}_i/N_i\to
\varrho{H}$ such that $\varrho{f}=\varphi\circ\pi$. Since $\pi$ is
open, the homomorphism $\varphi$ is continuous. It is also clear
that $\pi(G)=\prod_{i\in I}K_i$.

To finish the proof, it suffices to note that the product
$K=\prod_{i\in I} K_i$ of second-countable Baire spaces is
Baire, by \cite[Theorem~3]{Ox}. Finally, by Lemma~\ref{Le:HM},
the image $H=\varphi(K)$ is Baire as well.
\end{proof}

In some cases, Lemma~\ref{Le:Ext} remains valid for a non-metrizable
subgroup $K$ of a precompact group $G$. Again, we impose no
commutativity conditions on the groups that appear in the lemma
below.

\begin{lemma}\label{Le:Bai}
Let $K$ be a closed invariant pseudocompact subgroup of a precompact
topological group $G$. If the quotient group $G/K$ is Baire, so is $G$.
\end{lemma}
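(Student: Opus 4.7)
The plan is to invoke Proposition~\ref{SC}: since $G$ is precompact, it is enough to prove that every continuous homomorphism $f\colon G\to H$ onto a second countable topological group $H$ has Baire image. The essential idea is that inside such a second countable quotient the pseudocompact subgroup $f(K)$ collapses to a compact, hence closed, second countable subgroup, which puts us in a position to apply Lemma~\ref{Le:Ext}.

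Fix such an $f$. Since $H$ is a second countable topological group, it is metrizable, so every pseudocompact subspace of $H$ is compact. As $K$ is pseudocompact, the image $f(K)$ is pseudocompact, and hence compact and closed in $H$. Invariance of $f(K)$ in $H$ follows from the invariance of $K$ in $G$ together with the surjectivity of $f$: for $h=f(g)\in H$, one has $h\,f(K)\,h^{-1}=f(gKg^{-1})=f(K)$.

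Because $K$ lies in the kernel of the composition $G\stackrel{f}{\to}H\to H/f(K)$, this composition factors through the quotient map $G\to G/K$ to yield a continuous surjective homomorphism $\tilde f\colon G/K\to H/f(K)$. Since $G/K$ is Baire by hypothesis and precompact as a quotient of $G$, Lemma~\ref{Le:HM} forces the precompact group $H/f(K)$ to be Baire as well. The closed subgroup $f(K)\sub H$ is second countable (as a subspace of $H$) and Baire (as a compact Hausdorff space), and the coset space $H/f(K)$ is Baire, so Lemma~\ref{Le:Ext} yields that $H$ is Baire. Proposition~\ref{SC} then gives that $G$ is Baire.

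The key conceptual step---and the only real obstacle---is the observation that although $K$ itself need not be second countable (so Lemma~\ref{Le:Ext} cannot be applied directly to the pair $K\sub G$), its image in any second countable quotient of $G$ is forced to be compact by pseudocompactness, reducing the problem to the metrizable situation where Lemma~\ref{Le:Ext} is available. The remainder is a routine factorization of $f$ through $G/K$ combined with Lemmas~\ref{Le:HM} and~\ref{Le:Ext} and Proposition~\ref{SC}, and no commutativity of $G$ is used, which matches the generality claimed in the statement.
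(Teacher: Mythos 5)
Your proof is correct, and it reaches the conclusion by a genuinely different decomposition than the paper, even though the three ingredients are the same: Proposition~\ref{SC} to reduce to second countable images, the fact that a pseudocompact subspace of a metrizable space is compact, and Lemmas~\ref{Le:HM} and~\ref{Le:Ext}. The paper forms the diagonal product $p$ of $\pi\colon G\to G/K$ and $f\colon G\to H$, sets $M=p(G)\sub G/K\times H$, observes that the first-coordinate projection $\varphi\colon M\to G/K$ is open with compact metrizable kernel $p(K)$, applies Lemma~\ref{Le:Ext} to the extension $p(K)\hra M\to G/K$ (whose quotient is Baire by hypothesis) to conclude that $M$ is Baire, and only afterwards pushes the Baire property to $H=g(M)$ via Lemma~\ref{Le:HM}. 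You instead stay entirely inside $H$: you note that $f(K)$ is compact, second countable and invariant, transfer the Baire property from $G/K$ to $H/f(K)$ \emph{first} via Lemma~\ref{Le:HM} (using that the induced surjection $G/K\to H/f(K)$ is continuous because $\pi$ is an open quotient map, and that both groups are precompact), and then apply Lemma~\ref{Le:Ext} to the extension $f(K)\hra H\to H/f(K)$. This dispenses with the auxiliary group $M$ and the diagonal product altogether and is a bit shorter; what the paper's route buys is that Lemma~\ref{Le:Ext} is applied with the quotient equal to $G/K$ itself, so the hypothesis is used directly rather than after a transfer step. All the small verifications you need --- $f(K)$ closed and invariant in $H$, $H/f(K)$ Hausdorff and precompact, $f(K)$ Baire as a compact space --- are present and correct, and, as you observe, no commutativity is used.
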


\begin{proof}
By Proposition~\ref{SC}, it suffices to verify that every second
countable continuous homomorphic image $H$ of $G$ is Baire. Let
$f\colon G\to H$ be a continuous onto homomorphism. Denote by $\pi$
the canonical projection of $G$ onto $G/K$. Then the diagonal product
of $\pi$ and $f$, say $p$ is a continuous homomorphism of $G$ to the
product group $G/K\times H$. Put $M=p(G)$. Then there exist
continuous homomorphisms $\varphi \colon M\to G/K$ and
$g\colon M\to H$ satisfying $\pi=\varphi\circ{p}$ and $f=g\circ{p}$.
\[
\xymatrix{G \ar@{>}[r]^{\pi}\ar@{>}[d]_{g}
\ar@{>}[rd]^(.35){f}|!{[r];[d]}\hole &  Q  \ar@{>}[ld]|(.6){h} \ar@{-->}[d]^{\varphi}\\
P &  H \ar@{>}[l]^{\,\,i}}
\]
Clearly, $\varphi$ and $g$ are restrictions to $M$ of the
projections of $G/K\times H$ to the first and second factor,
respectively.

It is easy to see that the kernel of $\varphi$ is a compact metrizable
group. Indeed, let $e$ be the neutral element of $G/K$. Then
$\ker\varphi=\varphi^{-1}(e)=p(\pi^{-1}(e))=p(K)$ is a pseudocompact
subgroup of $M$. Since $\varphi$ is the restriction to $M$ of the
projection of the product $G/K\times H$ to the first factor, its kernel
is contained in $\{e\}\times H\cong H$, which is metrizable. Thus
$\ker\varphi$ is compact metrizable as a pseudocompact subspace
of a metrizable space.

Notice that the homomorphism $\varphi$ is open. Indeed, if $U$ is
open in $M$, then $\varphi(U)=\pi(p^{-1}(U))$. The latter set in
open in $G/K$ since $p$ is continuous and $\pi$ is open. Therefore,
$G/K$ is a quotient group of $M$ with respect to a compact
metrizable subgroup.

Since, by the assumptions of the lemma, the group $G/K$ is
Baire, it follows from Lemma~\ref{Le:Ext} that so is $M$.
Therefore, the continuous homomorphic image $H$ of $M$
is also Baire. It remains to apply Proposition~\ref{SC}
to conclude that $G$ is Baire as well.
\end{proof}

We recall that a topological group $H$ has \textit{countable pseudocharacter\/}
if the neutral element of $H$ is a $G_\delta$-set in $H$.

\begin{lemma}\label{Le:LL}
Let $f\colon G\to H$ be a continuous homomorphism of an $\om$-narrow
topological group $G$ satisfying the open refinement condition onto
a group $H$ of countable pseudocharacter. Then $H$ has a countable
network.
\end{lemma}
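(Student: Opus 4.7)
The plan is to factor $f$ through an open continuous homomorphism onto a second countable group. Once we have such a factorization, $H$ will be a continuous image of a second countable space, and the image of a countable base under a continuous surjection is always a countable network for the target.

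First, I would use the countable pseudocharacter of $H$ to pick a sequence $\{V_n:n\in\om\}$ of open neighborhoods of the neutral element $e_H$ in $H$ with $\bigcap_{n\in\om}V_n=\{e_H\}$; then $\bigcap_{n\in\om}f^{-1}(V_n)=\ker f$. Next, invoking the characterization of $\om$-narrowness recalled in Subsection~\ref{Subs}, I would regard $G$ as a topological subgroup of a product $\prod_{i\in I}M_i$ of second countable groups. For each $n\in\om$, the open neighborhood $f^{-1}(V_n)$ of $e_G$ contains a basic open set in the induced topology, supported on some finite set $S_n\sub I$. Setting $S=\bigcup_{n\in\om}S_n$ and letting $p\colon G\to M=\prod_{i\in S}M_i$ be the restriction to $G$ of the projection, I obtain a continuous homomorphism into a second countable group with $\ker p\sub\bigcap_{n\in\om}f^{-1}(V_n)=\ker f$.

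Now I would apply ORC to $p\colon G\to p(G)$---the image $p(G)$ is second countable as a topological subgroup of $M$---to obtain an open continuous homomorphism $\pi\colon G\to K$ onto a second countable group $K$ and a continuous homomorphism $g\colon K\to p(G)$ with $p=g\circ\pi$. Then $\ker\pi\sub\ker p\sub\ker f$, so $f$ descends to a well-defined homomorphism $h\colon K\to H$ with $f=h\circ\pi$, and the surjectivity and openness of $\pi$ yield $h^{-1}(W)=\pi(f^{-1}(W))$ for every open $W\sub H$; hence $h$ is continuous. Taking any countable base $\bcal$ of $K$, the family $\{h(B):B\in\bcal\}$ is then a countable network for $H=h(K)$.

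The delicate step is the passage from the factorization $p$ through a second countable group to an \emph{open} such factorization: this is exactly where ORC enters the argument and where it would break down without that hypothesis, since without openness $h$ need not be continuous and the countable base of $K$ could not be transported to a network on $H$.
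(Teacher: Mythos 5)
Your argument is correct, and its overall architecture matches the paper's: produce a continuous homomorphism of $G$ onto a second countable group whose kernel is contained in $\ker f$, use ORC to upgrade it to an \emph{open} continuous homomorphism $\pi\colon G\to K$ onto a second countable group, check that $f$ factors continuously through $\pi$ (openness of $\pi$ being exactly what makes the factor map continuous, as you note), and push a countable base of $K$ forward to a countable network for $H$. Where you genuinely diverge is in how the initial second countable factorization is obtained. The paper works on the $H$ side: since $H$ is $\om$-narrow (as a continuous image of $G$) and has countable pseudocharacter, it cites \cite[Corollary~5.2.12]{AT} to get a continuous isomorphism $i\colon H\to P$ onto a second countable group, and then applies ORC to $g=i\circ f$; the factor map onto $H$ is recovered as $i^{-1}\circ h$. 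You work on the $G$ side: you realize $G$ as a subgroup of a product $\prod_{i\in I}M_i$ of second countable groups (the characterization of $\om$-narrowness recalled in Subsection~\ref{Subs}), and project onto the countably many coordinates supporting basic neighborhoods inside the sets $f^{-1}(V_n)$; the inclusion $\ker p\sub\ker f$ holds because $\ker p$ lies in each of those basic neighborhoods, hence in $\bigcap_n f^{-1}(V_n)=\ker f$. Your version is self-contained modulo the subgroup-of-products characterization and avoids the condensation result for $\om$-narrow groups of countable pseudocharacter (which is itself usually proved by essentially the computation you carry out), at the cost of a somewhat longer verification; the paper's version is shorter but leans on the cited corollary. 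Both routes are sound and yield the same conclusion.
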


\begin{proof}
The group $H$ is $\om$-narrow as a continuous homomorphic image of
the $\om$-narrow group $G$. Since, by our assumptions, the neutral
element of $H$ is a $G_\delta$-set, it follows from
\cite[Corollary~5.2.12]{AT} that there exists a continuous
isomorphism $i\colon H\to P$ onto a second countable topological
group $P$. Then $g=i\circ{f}$ is a continuous homomorphism of $G$
onto $P$. Hence we can find a continuous open homomorphism
$\pi\colon G\to Q$ onto a second countable group $Q$ and a
continuous homomorphism $h\colon Q\to P$ such that $g=h\circ\pi$.
\[
\xymatrix{ & G \ar@{>}[dr]^{f}\ar@{>}[dl]_{\pi} \ar@{>}[d]^{g} & \\
Q \ar@{>}[r]^{h} \ar@/_1.8pc/[rr]^{\varphi} & P  & H \ar@{>}[l]_{i}}
\]
\vskip5pt

Then $\varphi=i^{-1}\circ{h}$ is a homomorphism of $Q$ onto $H$.
Since the homomorphism $\pi$ is open, it follows from the equality
$\varphi\circ\pi=f$ that $\varphi$ is continuous. Therefore, the
images under $\varphi$ of the elements of a countable base for $Q$
form a countable network for $H$.
\end{proof}

Here we give some additional information on Baire groups.
It will be used in the proof of Proposition~\ref{Le:FF}.

\begin{lemma}\label{Le:BPT}
Let $G$ be a Baire topological group. If $G$ has a countable network,
then $G$ is separable metrizable.
\end{lemma}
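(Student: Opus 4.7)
The plan is to show that $G$ is first countable at the neutral element $e$; once this is established, the Birkhoff--Kakutani theorem yields metrizability, and separability is automatic since every space with a countable network is separable. Because $G$ is Tychonoff, I may refine the given countable network to one consisting of closed sets, which I write as $\ncal=\{N_n:n\in\om\}$. Being separable, $G$ is also $\om$-narrow: if $D$ is a countable dense subset and $V$ is any open neighborhood of $e$, then for each $g\in G$ the open set $gV^{-1}$ meets $D$, so $G=DV$.

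The candidate countable local base at $e$ is
\[
\bcal \,=\, \{\, B_N : N\in\ncal,\ \Int(N)\neq\emp\,\},
\]
where for each such $N$ I fix once and for all a point $x_N\in\Int(N)$ and set $B_N:=\Int(x_N^{-1}N)$. Each $B_N$ is open and contains $e=x_N^{-1}x_N$, because left translation by $x_N^{-1}$ is a homeomorphism sending the open set $\Int(N)$ onto an open subset of $x_N^{-1}N$ containing $e$.

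To check that $\bcal$ is a local base at $e$, given an open $U\ni e$ I pick a symmetric open $V\ni e$ with $V\cdot V\sub U$. The network property gives $V=\bigcup\{N\in\ncal : N\sub V\}$, and $\om$-narrowness gives a countable set $\{c_n\}\sub G$ with $G=\bigcup_{n}c_nV$. Thus
\[
G \,=\, \bigcup_{n\in\om}\bigcup_{N\in\ncal,\ N\sub V}c_nN,
\]
a countable union of closed subsets of $G$. The Baire property of $G$ forces at least one $c_nN$ to have non-empty interior; by translation, the corresponding $N\sub V$ itself satisfies $\Int(N)\neq\emp$. For this $N$ the fixed point $x_N$ lies in $N\sub V$, so $x_N^{-1}\in V^{-1}=V$, whence $B_N\sub x_N^{-1}N\sub V\cdot V\sub U$.

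This shows $G$ is first countable at $e$, hence first countable at every point by homogeneity, and Birkhoff--Kakutani completes the argument. The main subtlety, rather than an obstacle, is bookkeeping: the family $\bcal$ must be fixed before any particular $U$ is chosen, so each $x_N$ cannot depend on $U$. The symmetry trick $V\cdot V\sub U$ combined with $x_N\in V$ is precisely what allows a single advance choice of $x_N$ to produce a neighborhood $B_N$ small enough to fit inside an arbitrary prescribed $U$.
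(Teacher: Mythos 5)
Your argument is correct, but it is organized differently from the paper's. The paper applies Baire once, globally: for each closed network element $F$ it observes that $F^*=F\sm\Int F$ is closed and nowhere dense, so Baireness yields a point $x$ lying in none of the sets $F^*$; at such a generic point the family $\{\Int F: x\in F\in\ncal\}$ is automatically a countable local base, and homogeneity transports this to every point. That version needs neither separability nor $\om$-narrowness and isolates a purely topological fact (a homogeneous Baire space with a countable closed network is first countable). You instead build the base directly at the neutral element: you fix in advance the translates $B_N=\Int(x_N^{-1}N)$ for those $N$ with non-empty interior, and for each neighborhood $U$ you re-apply Baire to the countable closed cover $G=\bigcup_{n}\bigcup_{N\sub V}c_nN$ obtained from $\om$-narrowness, concluding that some $N\sub V$ has interior; the symmetric choice $V\cdot V\sub U$ then absorbs the translation by $x_N^{-1}\in V$. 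All steps check out --- the refinement to a closed network, the deduction of $\om$-narrowness from separability, the translation-invariance of ``has non-empty interior,'' and the containment $B_N\sub x_N^{-1}N\sub V\cdot V\sub U$ --- and your closing remark about fixing the $x_N$ independently of $U$ addresses the one genuine pitfall. The trade-off is that your route uses the group structure earlier and more heavily (translations and a symmetric neighborhood), while the paper's generic-point argument is shorter and defers the group structure entirely to the homogeneity and Birkhoff--Kakutani steps; in exchange, yours exhibits an explicit countable base at the identity rather than at an unspecified point.
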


\begin{proof}
Let $\ncal$ be a countable network for $G$. Since $G$ is regular,
the closures of the elements of $\ncal$ also form a countable
network for $G$. Hence we can assume that each element of $\ncal$
is closed in $G$. For every $F\in\ncal$, let $F^*=F\sm\Int F$. Then
$F^*$ is a closed nowhere dense subset of $G$ and, since $G$ is
Baire, the set $P=G\sm\bigcup\{F^*: F\in\ncal\}$ is not empty.
Take a point $x\in P$. We claim that the family
\[
\bcal(x)=\{\Int F: x\in F\in\ncal\}
\]
is a local base for $G$ at $x$. Indeed, take an arbitrary
neighbourhood $O$ of $x$ in $G$. Since $\ncal$ is a network for
$G$, there exists $F\in\ncal$ such that $x\in F\sub O$. It follows
from $x\in P$ that $x\in\Int F$. Clearly, $\Int F\in \bcal(x)$ and
$\Int F\sub F\sub O$, whence our claim follows.

Since topological groups are homogeneous spaces, we conclude
that $G$ is first countable. Finally, every first countable topological
group is metrizable by the Birkhoff--Kakutani theorem, while
metrizable spaces with a countable network are separable and,
hence, second countable.
\end{proof}

We need one more fact about extensions of topological groups:

\begin{prop}\label{Le:FF}
Let $K$ be a closed invariant pseudocompact subgroup of an
$\om$-narrow group $G$ such that quotient group $G/K$ is
Baire and satisfies the open refinement condition. Then $G$
satisfies the open refinement condition (and is Baire).
\end{prop}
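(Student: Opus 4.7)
The Baire conclusion will follow by an adaptation of Lemma~\ref{Le:Bai} to the $\om$-narrow setting, using the ORC hypothesis on $G/K$ to substitute for precompactness in the reduction of Proposition~\ref{SC}; I concentrate on the more substantive ORC part. Given a continuous homomorphism $f\colon G\to H$ onto a second countable group $H$, the goal is to factor $f=g\circ\pi$ with $\pi$ an open continuous surjective homomorphism onto a second countable group.

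The plan is a pullback construction driven by the ORC of $G/K$. First, I would note that $f(K)$ is a compact metrizable subgroup of $H$, being a pseudocompact subgroup of the metrizable group $H$, and that by \cite{CR} the restriction $f|_K\colon K\to f(K)$ is an open continuous surjection. The composition $q_H\circ f\colon G\to H/f(K)$ then kills $K$ and factors through $q\colon G\to G/K$ as $\bar f\circ q$, with $H/f(K)$ second countable. Applying ORC of $G/K$ to $\bar f$ yields $\bar\pi\colon G/K\to\bar L$ open onto a second countable $\bar L$ and $\bar g\colon\bar L\to H/f(K)$ continuous with $\bar g\circ\bar\pi=\bar f$; set $\pi_1=\bar\pi\circ q$, an open continuous surjection of $G$ onto $\bar L$ with $\ker\pi_1\supseteq K$.

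Next I form the pullback $L=\bar L\times_{H/f(K)}H\sub\bar L\times H$, a closed subgroup and hence second countable, together with $p\colon G\to L$ defined by $p(x)=(\pi_1(x),f(x))$. A routine check using $\bar g\circ\pi_1=q_H\circ f$ and the surjectivity of $f|_K$ onto $f(K)$ shows that $p$ is a continuous surjective homomorphism and that $f=\mathrm{pr}_H\circ p$. The real work is proving $p$ open, which then turns $p$ into the map $\pi$ required by ORC. My openness argument will be as follows: given a symmetric open neighborhood $U\ni e$ and a symmetric $U_1\ni e$ with $U_1U_1\sub U$, openness of $f|_K$ makes $f(U_1\cap K)$ open in $f(K)$, so there exists an open $V_0\sub H$ with $V_0\cap f(K)=f(U_1\cap K)$; choose a symmetric open $V_1\ni e$ with $V_1^2\sub V_0$, set $U_0=U_1\cap f^{-1}(V_1)$, and put $V_2=\pi_1(U_0)$. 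For any $(\ell,h)\in(V_2\times V_1)\cap L$ and any $u_0\in U_0$ with $\pi_1(u_0)=\ell$ one has $f(u_0)\in V_1$ and, from the pullback relation, $h\,f(u_0)^{-1}\in f(K)$; combined with $V_1$ symmetric this forces $h\,f(u_0)^{-1}\in V_1^2\cap f(K)\sub f(U_1\cap K)$, so one can pick $k\in U_1\cap K$ with $f(k)=h\,f(u_0)^{-1}$, whereupon $u=ku_0\in U_1U_1\sub U$ satisfies $\pi_1(u)=\ell$ (since $k\in K\sub\ker\pi_1$) and $f(u)=h$; that is, $p(u)=(\ell,h)\in p(U)$.

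The hard part is exactly this openness verification. Because $f$ itself need not be open, any naive use of the diagonal $(\pi_1,f)$ fails, and one must coordinate $U_1,V_0,V_1,U_0$ so that the openness of $f|_K$ (available from pseudocompactness of $K$ via \cite{CR}) compensates for the non-openness of $f$ on all of $G$. The symmetric condition $V_1^2\sub V_0$ is precisely what forces the ``error'' $h\,f(u_0)^{-1}$ to land in the controlled set $f(U_1\cap K)$, furnishing a correction element $k\in K\cap U_1$ that does not push $u=ku_0$ outside $U$; once this is arranged, ORC for $G$ drops out with $\pi=p$ and $g=\mathrm{pr}_H$.
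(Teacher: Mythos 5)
Your ORC argument is correct, and it takes a genuinely different route from the paper's. The paper replaces $H$ by the same underlying group $H_0$ equipped with the quotient topology induced by $f$, so that $f_0\colon G\to H_0$ is open by fiat, and then concentrates on proving that $H_0$ is second countable: it passes to $H_0/C_0$ with $C_0=f_0(K)$, notes that this quotient is an open continuous image of the Baire group $G/K$, and invokes Lemma~\ref{Le:LL} ($\om$-narrowness plus ORC of $G/K$ give a countable network), Lemma~\ref{Le:BPT} (Baire plus countable network give a countable base), and Vilenkin's theorem to recover second countability of $H_0$ from that of $C_0$ and $H_0/C_0$. Your pullback construction bypasses this entire chain: you use only ORC of $G/K$ applied to the induced map into $H/f(K)$ together with openness of $f\res_K$ onto the compact metrizable group $f(K)$ (Comfort--Ross), and you establish openness of $p=(\pi_1,f)$ directly. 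I checked that verification and it is sound: for $(\ell,h)\in(V_2\times V_1)\cap L$ the pullback relation forces $hf(u_0)^{-1}\in f(K)\cap V_1V_1\sub V_0\cap f(K)=f(U_1\cap K)$, and the corrected element $u=ku_0$ lies in $U_1U_1\sub U$ with $p(u)=(\ell,h)$, so $p(U)\supseteq (V_2\times V_1)\cap L$, a neighborhood of the identity in the second countable group $L$. A notable by-product is that your proof of the ORC conclusion uses neither the Baire property of $G/K$ nor $\om$-narrowness of $G$, so it gives a formally stronger preservation result for ORC alone; the paper's extra hypotheses are consumed by its particular method (and by the Baire conclusion). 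On that parenthetical Baire conclusion you are no more explicit than the paper, which simply cites Lemma~\ref{Le:Bai} although that lemma is stated for precompact $G$; if you intend the statement in full $\om$-narrow generality, you should say how the reduction in Proposition~\ref{SC} survives without precompactness (its proof uses countable cellularity of $G$), but this does not affect the ORC part, which is the substance of the proposition.
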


\begin{proof}
That $G$ is Baire follows directly from Lemma~\ref{Le:Bai}.
Denote by $\pi$ the canonical projection of $G$ onto the quotient
group $G/K$ and consider an arbitrary continuous homomorphism
$f\colon G\to H$ onto a second countable group $H$. Let $H_0$ be
the same underlying group $H$ that carries the quotient topology
with respect to the homomorphism $f$; then the homomorphism $f$
considered as a mapping of $G$ to $H_0$ is denoted by $f_0$. Clearly,
$f_0$ is open and continuous. Let $i$ be the identity homomorphism
of $H_0$ onto $H$. Then $i$ is continuous and satisfies
$f=i\circ{f_0}$. To finish the proof, it suffices to verify that the
group $H_0$ has a countable base.

Put $C=f(K)$ and $C_0=f_0(K)$. Then $C$ and $C_0$ are
pseudocompact invariant subgroups of $H$ and $H_0$, respectively.
Since the group $H$ is second countable, $C$ is compact. The group
$C_0$ is also compact. Indeed, the neutral element of $H_0$ is a
$G_\delta$-set since $i\colon H_0\to H$ is a continuous isomorphism
onto a second countable group. Therefore, every pseudocompact
subspace of $H_0$ is compact and has a countable base
\cite[Proposition~3.4]{Ar0}. In particular, $C_0$ is a compact
metrizable subgroup of $H_0$. It is clear that $i(C_0)=C$ and
the restriction of $i$ to $C_0$ is a topological isomorphism of
$C_0$ onto $C$.

Let $p\colon H\to H/C$ and $p_0\colon H_0\to H_0/C_0$ be
canonical projections. Then there exists a continuous isomorphism
$j\colon H_0/C_0\to H/C$ satisfying $j\circ{p_0}= p\circ{i}$. The
group $H/C$ is second countable as the image of the second
countable group $H$ under the open continuous homomorphism
$p$. Since $j$ is one-to-one and continuous, the group $G_0/H_0$
has countable pseudocharacter.

Since the kernel $K$ of the homomorphism $\pi$ is contained in
the kernel of the homomorphism $p_0\circ{f_0}$, there exists a
homomorphism $\varphi\colon G/K\to H_0/C_0$ satisfying
$\varphi\circ\pi=p_0\circ{f_0}$.
\[
\xymatrix{G \ar@{>}[rr]^{f} \ar@{>}[dr]^{f_0} \ar@{>}[dd]_{\pi}
 &  &  H \ar@{>}[dd]^{p}\\
 & H_0 \ar@{>}[d]^{p_0} \ar@{>}[ur]^{i} & \\
G/K  \ar@{>}[r]^{\varphi} & H_0/C_0 \ar@{>}[r]^{j}  & H/C}
\]
\vskip4pt

Since $\pi$ is open and the composition $p_0\circ{f_0}$ is
continuous, we conclude that $\varphi$ is also continuous.
Further, since $p_0$ and $f_0$ are open, the homomorphisms
$p_0\circ{f_0}$ and $\varphi$ are open as well. Therefore the
group $H_0/C_0$ is the Baire as an open continuous image
of the Baire group $G/K$.

Clearly, $G/K$ is $\om$-narrow as a continuous homomorphic
image of the $\om$-narrow group $G$. By the assumptions of
the proposition, the group $G/K$ satisfies ORC, while the neutral
element of the group $H_0/C_0$ is a $G_\delta$-set. Hence, by
Lemma~\ref{Le:LL}, $H_0/C_0$ has a countable network.
Then, according to Lemma~\ref{Le:BPT}, $H_0/C_0$ has
a countable base. Finally, since $C_0=\ker{p_0}$ has a
countable base, Vilenkin's theorem (see \cite{Vi} or
\cite[Corollary~1.5.21]{AT}) implies that the group $H_0$ has a
countable base as well.
\end{proof}

\section{Quotients of reflexive groups}\label{Sec:4}
We are finally in a position to show that the class of reflexive groups
satisfying the conditions of Theorem~\ref{Th:MM} contains many
precompact non-pseudo\-compact groups. Further, we will also show
that taking quotients destroys reflexivity quite easily (see
Corollaries~\ref{Cor:QPS} and~\ref{Cor:MGV}, as well as
Theorem~\ref{Ex:No}).

\begin{theorem}\label{Th:Qu}
Let $H$ be a precompact Abelian group with the Baire property that
satisfies ORC. Then there is a precompact group $G$ such that
$H=G/N$, where $N$ is a closed pseudocompact subgroup of $G$, and
\begin{enumerate}
\item[{\rm (i)}] $G$ is Baire;
\item[{\rm (ii)}] $G$ satisfies ORC;
\item[{\rm (iii)}] $G$ contains no infinite compact subsets;
\item[{\rm (iv)}] $G$ is reflexive;
\item[{\rm (v)}] all countable subgroups of $G$ are $h$-embedded.
\end{enumerate}
\end{theorem}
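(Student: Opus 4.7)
The plan is to exploit the extension theorems of Section~\ref{Sec:ORP}. I construct $G$ as a precompact extension $N \hookrightarrow G \to H$ where $N$ is a carefully chosen pseudocompact Abelian group without infinite compact subsets. Properties (i) (Baire) and (ii) (ORC) will then be imported from $H$ through Lemma~\ref{Le:Bai} and Proposition~\ref{Le:FF}. Property (iii) (no infinite compact subsets) will follow from the construction of the extension, together with the corresponding property of $N$. Property (iv) (reflexivity) is then a formal consequence of (i)--(iii) via Theorem~\ref{Th:MM}. Property (v) ($h$-embedding of all countable subgroups) will be arranged via the Hern\'andez--Macario duality \cite{HM}, which reduces it to the pseudocompactness of $G^\wedge$.

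\textbf{Construction.} Using the type of construction underlying Corollary~\ref{Cor:SBB} and Theorem~\ref{Ex:22} (cf.\ \cite{ACDT,GM}), fix a pseudocompact Abelian group $N$ of sufficiently large weight relative to $|H|$ such that (a) $N$ has no infinite compact subsets, and (b) every countable subgroup of $N$ is $h$-embedded in $N$. Algebraically set $G=N\oplus H$, and topologize $G$ using Comfort--Ross duality (Theorem~\ref{Du-CR}) by a point-separating subgroup $\Gamma\le \mathrm{Hom}(G,\T)$. The subgroup $\Gamma$ must contain the pull-back of $H^\wedge$ along the projection $G\to H$ (to guarantee that the quotient topology on $G/N$ coincides with the given topology on $H$) and the extension-by-zero of $N^\wedge$ (to guarantee that $N\hookrightarrow G$ is a topological embedding onto a closed subgroup). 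Finally, $\Gamma$ is augmented with ``mixed'' characters arising from a suitable twist homomorphism $\chi\colon H\to\varrho{N}$; these extra characters are designed to force the absence of infinite compact subsets in $G$ and to make $G^\wedge$ pseudocompact.

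\textbf{Verifications and main obstacle.} Once the construction is in place, (i) follows from Lemma~\ref{Le:Bai} (since $G/N\cong H$ is Baire by hypothesis and $N$ is pseudocompact); (ii) follows from Proposition~\ref{Le:FF} (using that $G$ is $\om$-narrow as a precompact group and that $G/N$ satisfies ORC); (iii) holds by construction; (iv) is Theorem~\ref{Th:MM} applied to (i)--(iii); and (v) follows by invoking the Hern\'andez--Macario theorem once $G^\wedge$ is known to be pseudocompact. The central technical obstacle is the construction of the mixed characters in $\Gamma$: one must simultaneously preserve the quotient topology on $G/N\cong H$, force every compact subset of $G$ to be finite, and maintain pseudocompactness of the dual $G^\wedge$. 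A naive direct product $G=N\times H$ with $\Gamma=N^\wedge\oplus H^\wedge$ gives (i), (ii), (iv), (v) by routine arguments, but fails (iii) whenever $H$ has an infinite compact subset; so a non-trivial twist is essential. Exploiting the large weight of $N$ and the freedom in choosing $\chi$ to ``spread'' potential compact subsets of $G$ into $N$ (where they must terminate as finite sets) will be the crux of the argument.
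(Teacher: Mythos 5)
Your high-level reduction is sound and matches the paper where it is routine: once a precompact $G$ with a closed pseudocompact subgroup $N$ and $G/N\cong H$ is in hand, (i) is Lemma~\ref{Le:Bai}, (ii) is Proposition~\ref{Le:FF}, and (iv) is Theorem~\ref{Th:MM} applied to (i)--(iii) --- exactly as in the paper. But the proposal has a genuine gap precisely where you place ``the crux of the argument'': the twist homomorphism and the ``mixed'' characters that are supposed to force (iii) (no infinite compact subsets of $G$) and the pseudocompactness of $G^\wedge$ are never constructed, only postulated. Nothing in your sketch explains how a single homomorphism $\chi\colon H\to\varrho{N}$ could ``spread'' an infinite compact subset of $H$ into $N$; indeed any such $\Gamma$-topology on $N\oplus H$ still projects continuously and openly onto $H$, and controlling the compact subsets of the resulting group is exactly the hard combinatorial/set-theoretic content of the theorem. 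Your route to (v) is likewise circular: you reduce it to the pseudocompactness of $G^\wedge$ via Hern\'andez--Macario, but that pseudocompactness is again deferred to the unconstructed mixed characters. (A side remark: your claim that the naive product $N\times H$ satisfies (iv) is also wrong --- $(N\times H)^\wedge\cong N^\wedge\times H^\wedge$, so the product is reflexive only if $H$ is, which is not assumed.)

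The paper closes this hole not by building the extension by hand but by quoting an existing construction: by \cite[Theorem~5.5]{DT} there is a pseudocompact Abelian group $P$, \emph{all of whose countable subgroups are $h$-embedded}, together with a continuous open homomorphism $\pi\colon P\to\varrho{H}$ onto the compact completion of $H$ whose kernel $N$ is pseudocompact. By \cite[Proposition~2.1]{ACDT}, the $h$-embedding property already forces all compact subsets of $P$ to be finite. Setting $G=\pi^{-1}(H)\sub P$ gives an open continuous $\varphi=\pi\res_G$ onto $H$ with $\ker\varphi=N$, and then (iii) and (v) are inherited by $G$ simply as a subgroup of $P$ --- no duality argument for (v) and no ad hoc character construction for (iii) are needed. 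If you want to make your proposal into a proof, you must either carry out the twist construction in full (essentially reproving \cite[Theorem~5.5]{DT}) or cite a result of that strength.
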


\begin{proof}
Denote by $\varrho{H}$ the completion of $H$. Then $\varrho{H}$
is a compact group. According to \cite[Theorem~5.5]{DT}, one can
find a pseudocompact Abelian group $P$ all countable subgroups
of which are $h$-embedded and a continuous open homomorphism
$\pi$ of $P$ onto $\varrho{H}$ such that the kernel $N$ of $\pi$ is
a pseudocompact subgroup of $P$. [The fact that all countable
subgroups of $P$ are $h$-embedded was explicitly verified in
the proof of \cite[Theorem~5.5]{DT} though it was not given
in the body of the theorem.] Hence we can apply
\cite[Proposition~2.1]{ACDT} to conclude that all compact
subsets of $P$ are finite.

Let $\varphi$ be the restriction of $\pi$ to the subgroup
$G=\pi^{-1}(H)$ of $P$. Then $G$ is precompact as a subgroup of
the pseudocompact (hence precompact) group $P$ and $\varphi$
is a continuous open homomorphism of $G$ onto $H$. It follows that
$H\cong G/N$. It is also clear that $\ker\varphi=N=\ker\pi$ is a closed
pseudocompact subgroup of $G$, so Lemma~\ref{Le:Bai} implies that
the group $G$ is Baire, i.e., $G$ satisfies (i). Since $G$ is a subgroup
of $P$, all compact subsets of $G$ are finite and all countable
subgroups of $G$ are $h$-embedded. This implies (iii) and (v).
By Proposition~\ref{Le:FF}, $G$ satisfies ORC, which gives (ii).
Therefore, item (iv) of the theorem, the reflexivity of $G$,
follows from Theorem~\ref{Th:MM}.
\end{proof}

Proposition~\ref{th:Per} shows that the open refinement condition
imposed upon the group $H$ in the above theorem appeared not
accidentally.

Since every second countable group satisfies ORC, the next result is
immediate from Theorem~\ref{Th:Qu}:

\begin{coro}\label{Cor:QPS}
Every second countable precompact Abelian group $H$ with the
Baire property is a quotient group of a reflexive precompact group
with respect to a closed pseudocompact subgroup.
\end{coro}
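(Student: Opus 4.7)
The plan is to invoke Theorem~\ref{Th:Qu} directly, and to observe that the only hypothesis of that theorem beyond what is already assumed---namely the open refinement condition---comes for free when $H$ is second countable. The hypotheses of Theorem~\ref{Th:Qu} require that $H$ be a precompact Abelian group having the Baire property and satisfying ORC. The first two are given, so only ORC needs verification.

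To see that every second countable group satisfies ORC, let $f\colon H\to H'$ be a continuous homomorphism onto a second countable group $H'$. Set $K=H$, $\pi=\mathrm{id}_H$, and $g=f$; then $\pi$ is a continuous open homomorphism of $H$ onto the second countable group $K$, and $f=g\circ\pi$ by construction. This is the immediate consequence of the definition noted in Subsection~\ref{Subs}.

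With the hypotheses of Theorem~\ref{Th:Qu} verified, that theorem produces a precompact group $G$ together with a closed pseudocompact subgroup $N$ of $G$ such that $H\cong G/N$; moreover, conclusion~(iv) of the theorem guarantees that $G$ is reflexive. This is exactly the statement of the corollary. There is no genuine obstacle here---the corollary is really a marker recording that the ORC requirement in Theorem~\ref{Th:Qu} is automatic in the second countable setting, so that all second countable precompact Abelian Baire groups fall within the reach of the quotient construction.
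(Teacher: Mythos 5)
Your proposal is correct and follows exactly the paper's route: the paper derives the corollary immediately from Theorem~\ref{Th:Qu} together with the observation (recorded in Subsection~\ref{Subs}) that every second countable group satisfies ORC, which is precisely your identity-homomorphism argument. Nothing further is needed.
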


Recall that a topological group $G$ is called \textit{almost
metrizable\/} or \textit{feathered\/} if $G$ contains a compact
subgroup $K$ such that the quotient space $G/K$ is metrizable
(see \cite{Pa} or \cite[Section~4.3]{AT}). Clearly, all compact
groups and all metrizable groups are almost metrizable.

\begin{remark}\label{Rem:LR}
It is easy to see that for a precompact almost metrizable group $G$,
one can always find a compact \textit{invariant\/} subgroup $K$ of
$G$ such that the quotient group $G/K$ has a countable base. We
assume for simplicity that $G$ is Abelian (even if the conclusion is
valid in general). Suppose we have chosen a compact subgroup
$K$ of a precompact Abelian group $G$ such that the quotient group
$G/K$ is metrizable. Since the group $G/K$ is also precompact
(hence $\om$-narrow), $G/K$ has a countable base by
\cite[Proposition~3.4.5]{AT}.
\end{remark}

Here is a more general version of Corollary~\ref{Cor:QPS}.

\begin{coro}\label{Cor:MGV}
Every precompact almost metrizable Abelian group $H$ with the
Baire property is a quotient group of a reflexive precompact group
with respect to a closed pseudocompact subgroup.
\end{coro}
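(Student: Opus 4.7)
The plan is to reduce to Theorem~\ref{Th:Qu} by verifying that $H$ satisfies ORC; once this is done, Theorem~\ref{Th:Qu} produces the required precompact reflexive group $G$ with $H\cong G/N$ for a closed pseudocompact subgroup $N$.

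First, I would invoke Remark~\ref{Rem:LR}: since $H$ is precompact, Abelian, and almost metrizable, there exists a compact (invariant, because $H$ is Abelian) subgroup $K$ of $H$ such that the quotient group $H/K$ has a countable base. Let $\pi\colon H\to H/K$ denote the canonical projection, which is continuous and open.

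Next, I would check that $H/K$ is itself a precompact Baire group satisfying ORC. Precompactness of $H/K$ is clear as a continuous homomorphic image of the precompact group $H$; the Baire property of $H/K$ follows from Lemma~\ref{Le:HM} applied to $\pi$. Since $H/K$ is second countable, it satisfies ORC trivially (every second countable group does, as noted after the definition of ORC).

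At this point the hypotheses of Proposition~\ref{Le:FF} are met: $H$ is $\om$-narrow (being precompact), $K$ is a closed invariant pseudocompact subgroup of $H$, and the quotient $H/K$ is Baire and satisfies ORC. Proposition~\ref{Le:FF} therefore yields that $H$ satisfies ORC (and reconfirms that $H$ is Baire). With the Baire property and ORC now established for $H$, Theorem~\ref{Th:Qu} applies directly and furnishes a precompact reflexive Abelian group $G$ together with a closed pseudocompact subgroup $N\sub G$ such that $H\cong G/N$, which is the desired conclusion.

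The only potentially nontrivial step is the ORC verification, but it is essentially packaged in Proposition~\ref{Le:FF}; the main obstacle is already circumvented by the preceding machinery, so the proof amounts to checking the hypotheses of that proposition and then invoking Theorem~\ref{Th:Qu}.
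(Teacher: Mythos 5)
Your proposal is correct and follows exactly the paper's own route: take the compact invariant subgroup $K$ with $H/K$ second countable (Remark~\ref{Rem:LR}), feed this into Proposition~\ref{Le:FF} to get ORC for $H$, and then apply Theorem~\ref{Th:Qu}. The only difference is that you spell out the hypothesis checks (precompact $\Rightarrow$ $\om$-narrow, compact $\Rightarrow$ pseudocompact, $H/K$ Baire via Lemma~\ref{Le:HM}, second countable $\Rightarrow$ ORC) that the paper leaves implicit.
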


\begin{proof}
Let $K$ be a compact subgroup of $H$ such that the quotient group
$H/K$ has a countable base. Then Proposition~\ref{Le:FF} implies
that $H$ satisfies ORC. Hence the required conclusion follows from
Theorem~\ref{Th:Qu}.
\end{proof}

Many groups $H$ satisfying the conditions of
Corollary~\ref{Cor:QPS} or~\ref{Cor:MGV} need not be reflexive.
One can take, for example, any proper subgroup of $\T$ of countable
index, where $\T$ is the circle group with the usual topology. A
considerably wider class of such groups is presented below.

We say, following \cite{CRT}, that a dense subgroup $H$ of a
topological Abelian group $G$ \textit{determines} $G$ if
the dual groups $G^\wedge$ and $H^\wedge$ are topologically
isomorphic (under the natural restriction mapping). The theorem
below refines Corollary~2.10 of \cite{DSh}, where the authors just claim
the existence of a proper dense subgroup $H\sub G$ that determines
$G$. First, one simple lemma is in order:

\begin{lemma}\label{Le:Di}
If $H$ is a subgroup of a Baire topological group $G$ and the
index of $H$ in $G$ is countable, then $H$ is Baire.
\end{lemma}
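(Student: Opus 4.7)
My plan is in three steps: (i) show that $H$ is non-meager as a subset of $G$; (ii) transfer this to non-meagerness of $H$ in its own topology; and (iii) upgrade non-meagerness to the Baire property using $\om$-narrowness of $H$, which is automatic in the paper's precompact setting.

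For (i), let $\{x_n:n\in\om\}$ be left coset representatives for $H$ in $G$ with $x_0=e$, so that $G=\bigsqcup_{n<\om}x_nH$. If $H$ were meager in $G$, say $H\sub\bigcup_kM_k$ with each $M_k$ nowhere dense in $G$, then $x_nH\sub\bigcup_kx_nM_k$ for every $n$, whence $G=\bigcup_{n,k}x_nM_k$ would be meager in itself, contradicting the Baire property of $G$. Thus $H$ is non-meager in $G$.

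Step (ii) rests on the following closure identity: for any closed nowhere dense $F\sub H$, the $G$-closure $\overline{F}^G$ is nowhere dense in $G$. Indeed, if some non-empty open $W\sub G$ were contained in $\overline{F}^G$, then $W$ would meet $F$ (being open in the $G$-closure of $F$) and hence meet $H$; the identity $\overline{F}^G\cap H=\overline{F}^H=F$---valid because any $G$-neighbourhood of a point of $H$ restricts to an $H$-neighbourhood and conversely, with $F$ closed in $H$---would then make $W\cap H$ a non-empty open subset of $H$ lying inside $F$, contradicting the nowhere-density of $F$. Consequently, if $H=\bigcup_kP_k$ were meager in itself with each $P_k$ closed nowhere dense in $H$, then $H\sub\bigcup_k\overline{P_k}^G$ would be meager in $G$, conflicting with step (i). Hence $H$ is non-meager in itself.

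For (iii), the paper's ambient group $G$ is precompact, hence $\om$-narrow, and so is its subgroup $H$. If $H$ were not Baire, there would exist a non-empty open $U\sub H$ meager in $H$; after translating by an element of $U$ we may assume $e\in U$, so $U$ is a meager open neighbourhood of $e$ in $H$. By $\om$-narrowness of $H$, there is a countable $C\sub H$ with $H=CU$, which exhibits $H$ as a countable union of meager translates of $U$ and therefore makes $H$ meager in itself---contradicting step (ii). Hence $H$ is Baire. The delicate point throughout is the closure identity in step (ii), which is what lets non-meagerness descend from $G$ to $H$; the $\om$-narrowness of $H$, available in the paper's precompact setting, is what then upgrades non-meagerness to the Baire property.
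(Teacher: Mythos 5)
Your proof is correct, and it is organized differently from the paper's. You split the argument into three modular steps: non-meagerness of $H$ in $G$ via the countable coset decomposition, descent of non-meagerness from $G$ to $H$ via the closure identity $\ovl{F}^G\cap H=\ovl{F}^H$, and the upgrade from ``non-meager in itself'' to Baire via $\om$-narrowness. The paper instead runs the whole computation inside $G$: assuming $H$ is not Baire, it takes a maximal disjoint family $\dcal$ of meager-in-$G$ open subsets of $H$ (translates of subsets of the bad open set $U$), observes that $D=\bigcup\dcal$ is meager in $G$ while $H\sm D$ is nowhere dense, concludes that $H$ itself is meager in $G$, and then invokes the same coset-counting contradiction you use in step (i); your closure-identity observation is exactly what the paper silently uses when it declares each $xV$ ``clearly'' meager in $G$. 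The one point to flag is that your step (iii) imports a hypothesis not present in the statement of the lemma: the lemma is asserted for an arbitrary Baire topological group $G$, whereas you invoke $\om$-narrowness of $H$, which you justify only by appealing to the precompact setting of the paper's applications (where $G$ is compact, so this is indeed harmless). The maximal-disjoint-family device in the paper's proof is precisely what removes this restriction: it shows that \emph{any} topological group that is non-meager in itself is Baire, with no narrowness assumption. So if you want your argument to prove the lemma in the stated generality, replace step (iii) by that standard fact (or by the paper's disjoint-family trick performed inside $H$); as written, your proof establishes a slightly weaker statement that nonetheless covers every use the paper makes of the lemma.
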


\begin{proof}
Suppose to the contrary that there exists a meager
non-empty open set $U\sub H$. Since translations in $H$ are
homeomorphisms, the family $\mathcal{B}$ of the sets $xV$,
where $x\in H$ and $V$ is a non-empty open set in $H$ with
$V\sub U$, constitutes a base of $H$. It is clear that every
element of $\mathcal{B}$ is a meager subset of $G$. Let
$\mathcal{D}$ be a maximal disjoint subfamily of $\mathcal{B}$.
Then the open set $D=\bigcup\mathcal{D}$ is dense in $H$, so
$F=H\sm D$ is a closed nowhere dense set in $H$.
In particular, $F$ is nowhere dense in $G$.

We claim that $D$ is meager in $G$. Indeed, let $\mathcal{D}=
\{D_i: i\in I\}$. Since every $D_i$ is meager in $G$, we can find
a countable family $\{M_{i,n}: n\in\om\}$ of nowhere dense sets
in $G$ such that $D_i=\bigcup_{n\in\om} M_{i,n}$. Then the set
$M_n=\bigcup_{i\in I} M_{i,n}$ is nowhere dense in $G$ (we use
the fact that $\mathcal{D}$ is disjoint) and $D=\bigcup_{n\in\om} M_n$
is meager in $G$. Therefore, $H=D\cup F$ is also meager in $G$.

Finally, the group $G$ is covered by countably many cosets
of $H$, so $G$ is meager in itself. This contradicts the Baire
property of $G$.
\end{proof}

\begin{theorem}\label{Ex:No}
Every infinite compact Abelian group $G$ contains a proper dense
almost metrizable Baire subgroup $H$ which determines $G$ and
satisfies the open refinement condition. Therefore, $H$ fails to be
reflexive.
\end{theorem}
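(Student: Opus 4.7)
The plan is to reduce $G$ to a compact metrizable quotient, carry out the construction there, and lift. Since $G^\wedge$ is an infinite discrete abelian group, pick a countably infinite subgroup $N \subseteq G^\wedge$ and put $K := N^\perp \subseteq G$, the annihilator of $N$ in $G$; then $K$ is compact and invariant, and $M := G/K$ is an infinite compact metrizable abelian group (its dual identifies with $N$). Let $\pi\colon G \to M$ denote the quotient homomorphism.

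The heart of the argument is to find a subgroup $D \subseteq M$ that is proper, dense, of countable infinite index, and determines $M$; one then sets $H := \pi^{-1}(D)$. The existence of a proper dense determining subgroup of $M$ is furnished by \cite[Corollary~2.10]{DSh}, and one can refine its construction---or splice it with an auxiliary dense subgroup $E \subseteq M$ of countable infinite index, whose existence follows from the abstract algebraic structure of $M$ (its divisible or near-divisible summands)---to additionally arrange countable infinite index while preserving the determining property. The determining property passes from a determining subgroup $D_0 \subseteq D$ to $D$ because any continuous character of $D$ is determined by its restriction to the dense set $D_0$. Then $K \subseteq H$ and $H/K \cong D$ (the restriction of the open map $\pi$ to $H$ is open onto $D$).

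The claimed properties of $H$ follow from the machinery of earlier sections. $H$ is almost metrizable because $K$ is a compact subgroup of $H$ and $H/K \cong D$ is a subgroup of the metrizable $M$. Since $D$ has countable index in the compact (hence Baire) group $M$, Lemma~\ref{Le:Di} gives that $D$ is Baire; then Lemma~\ref{Le:Bai}, applied with the pseudocompact subgroup $K$, yields that $H$ is Baire. The group $D$ is second countable, so it satisfies ORC trivially; Proposition~\ref{Le:FF}, applied with the pseudocompact $K$ inside the $\omega$-narrow $H$ (precompact, being a subgroup of compact $G$), then yields that $H$ satisfies ORC. Density and properness of $H$ in $G$ are inherited from those of $D$ in $M$, using $K \subseteq H$.

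For the determining property: given $\chi \in H^\wedge$, the restriction $\chi|_K$ is a continuous character of the compact subgroup $K$, which extends by Pontryagin duality to some $\tilde\chi_K \in G^\wedge$. Then $\chi - \tilde\chi_K|_H$ vanishes on $K$ and descends to a character $\chi_D \in D^\wedge$; by the determining property of $D$, this lifts to some $\tilde\chi_D \in M^\wedge$, and $\tilde\chi_K + \tilde\chi_D \circ \pi \in G^\wedge$ restricts to $\chi$. Hence the restriction map $G^\wedge \to H^\wedge$ is a group isomorphism (injective because $H$ is dense); a short additional argument, showing that $H^\wedge$ is discrete by lifting a separating compact subset of $D$ to $H$, promotes this to a topological isomorphism. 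Finally, since $H$ determines $G$ and $G$ is compact (hence reflexive), $H^{\wedge\wedge} \cong G^{\wedge\wedge} = G$, and the evaluation map $\alpha_H\colon H \to H^{\wedge\wedge}$ coincides with the inclusion $H \hookrightarrow G$, which is not surjective as $H \subsetneq G$; thus $H$ fails to be reflexive. The main obstacle is the construction of $D$: achieving countable infinite index together with the determining property requires carefully combining the classical construction of \cite{DSh} with structural information about the abstract abelian quotients of $M$.
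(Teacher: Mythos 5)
Your overall architecture is the same as the paper's: reduce to an infinite compact metrizable quotient $M$ of $G$, produce a proper dense Baire subgroup $D\sub M$ of countable index there, and pull it back along the (open) quotient map, using Lemma~\ref{Le:Di}, Lemma~\ref{Le:Bai} and Proposition~\ref{Le:FF} exactly as the paper does. However, the single step you defer as ``the main obstacle''---the construction of $D$---is precisely the mathematical content of the theorem, and your proof does not supply it. Saying that one can ``refine'' or ``splice'' the construction of \cite{DSh} with an auxiliary subgroup of countable index is not an argument: producing a subgroup of an infinite compact metrizable abelian group that is simultaneously \emph{proper}, \emph{dense}, and of \emph{countably infinite index} requires a concrete device. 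The paper's device is the divisible hull: write $D(M)=\oplus_{\alpha<\mathfrak{c}}C_\alpha$ with $M$ essential in $D(M)$, take a countable dense subgroup $S\sub M$ supported on a countable set $A$ of indices, delete a countably infinite set $B$ of indices disjoint from $A$, and set $D=M\cap\oplus_{\alpha\notin B}C_\alpha$. Then $D\supseteq S$ gives density, the index is countable, and---crucially---\emph{essentiality} forces $M\cap\oplus_{\alpha\in B}C_\alpha\neq 0$, which is what makes $D$ proper. Without some such argument your proof is incomplete at its core.

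A second, smaller point: your worry about preserving the determining property while arranging countable index is a red herring, and your character-lifting argument for showing that $H$ determines $G$ is more elaborate than needed. By Chasco's theorem (\cite[Theorem~2]{Cha}, or \cite[Proposition~4.11]{Aus}) the dual of every precompact metrizable group is discrete, so \emph{every} dense subgroup of a compact metrizable group determines it automatically; you need only arrange proper, dense, and countable index, and there is no need to invoke \cite{DSh} at all. (Your subsequent deductions---$H$ almost metrizable, Baire via Lemmas~\ref{Le:Di} and~\ref{Le:Bai}, ORC via Proposition~\ref{Le:FF} applied to the second countable Baire quotient $D$, and non-reflexivity from $H^{\wedge\wedge}\cong G$ compact while $H$ is a proper dense subgroup---are all sound and match the paper.)
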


\begin{proof}
Our first step is to prove the existence of such a subgroup $H$ in
the case $G$ is metrizable. Then $G$ has countable weight,
$|G|=2^\om=\cont$, and there exists a countable dense
subgroup $S$ of $G$.

Denote by $D(G)$ the divisible hull of $G$ containing $G$ as an
essential subgroup (see \cite{Fu}). Then $D(G)$ is the direct sum of
countable subgroups, say, $D(G)=\oplus_{\alpha<\cont}C_\alpha$.
Since $S\sub G$ is countable, there exists a countable set $A\sub
\cont$ such that $S\sub\oplus_{\alpha\in A}C_\alpha$. Take any
countable infinite set $B\sub\cont$ disjoint from $A$ and put
$D=\cont\sm B$. Then $A\sub D$ and, hence, $S$ is a subgroup
of the group $H=G\cap\oplus_{\alpha\in D}C_\alpha$. Since $G$
is an essential subgroup of $D(G)$, the intersection $G\cap\oplus
_{\alpha\in B}C_\alpha$ is non-trivial, which in its turn implies
that $H$ is a proper subgroup of $G$. It is also clear that $H$ has
countable index in $G$. Since $S\sub H$, we conclude that $H$ is
dense in $G$. By Lemma~\ref{Le:Di}, $H$ is Baire.

The metrizable group $H$ is clearly almost metrizable. Since, in
addition, $H$ is dense in the compact metrizable group $G$, the
dual group $H^\wedge$ coincides with the discrete group $G^\wedge$
(see \cite[Theorem~2]{Cha} or \cite[Proposition~4.11]{Aus}). It follows
that $H$ determines $G$ and that the second dual
$H^{\wedge\wedge}\cong G^{\wedge\wedge}\cong G$ is
compact, while $H$ is not. Therefore, $H$ is not reflexive.

Suppose that $G$ is not metrizable. Then there exists a continuous
homomorphism $f\colon G\to K$ onto an infinite compact metrizable
group $K$. We have just proved that $K$ contains a proper dense
Baire subgroup, say, $H_0$. Let $H=f^{-1}(H_0)$. Then $H$ is a
proper subgroup of $G$ and, since the homomorphism $f$
is open, $H$ is dense in $G$. Let $\varphi$ be the restriction of
$f$ to $H$. Then $\varphi$ is open and $\ker f=\ker\varphi$, i.e.,
the homomorphism $\varphi$ has the compact kernel $N$. Hence
the group $H$ is almost metrizable, while Lemma~\ref{Le:Bai} implies
that $H$ is Baire. Since $N$ is compact and the quotient group
$H_0\cong H/N$ is not reflexive, neither is $H$ (see
\cite[Theorem~2.6]{BCM}). Note that $H$ satisfies ORC according to
Proposition~\ref{Prop:SB}.

Finally, since $H_0^\wedge$ is discrete and $N$ is
compact, a direct verification shows the dual group $H^\wedge$
is discrete as well. The density of $H$ in $G$ implies that the
restriction mapping $\chi\mapsto\chi\res_{H}$, with $\chi\in G^\wedge$,
is an isomorphism of the (abstract) group $G^\wedge$ onto $H^\wedge$.
Since both dual groups are discrete, this mapping is a topological
isomorphism. Hence $H$ determines $G$.
\end{proof}

Since every precompact group is a dense subgroup of a compact group,
it is natural to ask which compact groups $G$ contain proper dense
reflexive subgroups. By \cite[Theorem~2]{Cha}, the dual group of every
precompact metrizable Abelian group is discrete, so the answer is in
the negative for every compact metrizable group $G$. Therefore,
under the Continuum Hypothesis, the following result gives a
complete answer to the question.

\begin{theorem}\label{Ex:22}
Every compact Abelian group $G$ of weight greater than or equal to
$2^\om$ contains a proper dense reflexive pseudocompact subgroup.
\end{theorem}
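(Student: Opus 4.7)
The plan is to exhibit a proper dense pseudocompact subgroup $H$ of $G$ whose compact subsets are all finite, and then invoke Corollary~\ref{Cor:SBB} to deduce that $H$ is reflexive. This reduces the theorem to a construction problem inside the compact group $G$.

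Since $G$ is compact Hausdorff, a dense subgroup $H\sub G$ is pseudocompact iff it is $G_\delta$-dense in $G$, i.e., meets every non-empty $G_\delta$-subset of $G$. Moreover, as compactness is absolute, a subset of $H$ is compact in the subspace topology iff it is closed in $G$, so the requirement that $H$ have no infinite compact subset amounts to: no infinite closed subset of $G$ is contained in $H$. The hypothesis $w(G)\geq 2^\om$ yields $|G|=2^{w(G)}\geq 2^{2^\om}$; a routine cardinal count then shows that $G$ has at most $|G|$ non-empty $G_\delta$-subsets and at most $|G|$ infinite closed subsets, while each non-empty $G_\delta$-subset of $G$ has cardinality $|G|$ (since a closed $G_\delta$-subgroup of $G$ has compact metrizable quotient of size $\leq 2^\om$, and $|G|\geq 2^{2^\om}$).

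I would build $H$ by transfinite recursion of length $\lambda=|G|$. Fix once and for all a point $x^*\in G$ to be forbidden, guaranteeing properness, and enumerate the non-empty $G_\delta$-subsets $\{D_\alpha:\alpha<\lambda\}$ and the infinite closed subsets $\{K_\alpha:\alpha<\lambda\}$ of $G$. At stage $\alpha$, commit a point $y_\alpha\in K_\alpha$ that will remain outside the final $H$, and then pick $x_\alpha\in D_\alpha$ so that the subgroup $H_{\alpha+1}=\langle x_\beta:\beta\leq\alpha\rangle$ still avoids $x^*$ and every committed $y_\beta$ ($\beta\leq\alpha$). The set of \emph{bad} candidates for $x_\alpha$ in $D_\alpha$---those whose adjunction would pull a committed point into the growing subgroup via a relation $kx_\alpha=z-h$ with $h\in H_\alpha$, $k\neq 0$, $z\in\{x^*\}\cup\{y_\beta:\beta\leq\alpha\}$---has cardinality strictly less than $|G|=|D_\alpha|$ at each stage, so the choice is possible. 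Setting $H=\langle x_\alpha:\alpha<\lambda\rangle$ then produces a subgroup that is proper (omits $x^*$), dense (meets every non-empty open set, since each is a $G_\delta$), pseudocompact (by $G_\delta$-density), and free of infinite compact subsets (each $K_\alpha$ misses $y_\alpha$).

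The main obstacle is the bad-set count in the torsion regime: the solution set of $kx_\alpha=z-h$ with $k\neq 0$, when non-empty, is a coset of the $k$-torsion subgroup $G[k]$, which in compact groups like $G=\T^\kappa$ or $G=\prod_p(\Z/p\Z)^\kappa$ may have cardinality $|G|$. To prevent a single bad equation from exhausting $D_\alpha$, I would choose the committed $y_\alpha$ carefully within $K_\alpha$---for instance, avoiding the union of cosets $h+kG$ with $h\in H_\alpha$ and $0\neq k\in\Z$, a union whose size is $<|G|$; this makes the relevant equations insoluble and trivialises their bad sets. The availability of such $y_\alpha\in K_\alpha$ follows from $|K_\alpha|\geq\om$ together with the structure theorem for compact Abelian groups (passing, if necessary, to a quotient that separates torsion from a connected or divisible component). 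With this care, the recursion goes through, and the resulting $H$ satisfies the hypotheses of Corollary~\ref{Cor:SBB}, which yields the reflexivity of $H$ and completes the proof.
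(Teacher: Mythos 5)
There is a genuine gap, and it sits exactly where the real difficulty of the theorem lies: the transfinite construction of a proper $G_\delta$-dense subgroup of $G$ omitting a point of every infinite closed set. Your diagnosis of the obstacle is correct --- the solution set of $kx_\alpha=z-h$ is a coset of the $k$-torsion subgroup $G[k]$, which can have cardinality $|G|$ --- but the proposed repair does not work. You suggest choosing $y_\alpha$ outside $\bigcup\{h+kG: h\in H_\alpha,\ 0\neq k\in\Z\}$; if $G$ is divisible (e.g.\ any connected compact group, in particular $\T^{\cont}$, the very first test case), then $kG=G$ for every $k\neq0$, that union is all of $G$, no such $y_\alpha$ exists, and $G[k]$ still has full cardinality. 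The repair also says nothing about the equations $kx_\alpha=x^{*}-h$ involving the point $x^{*}$ fixed in advance, and nothing in the recursion prevents $K_\alpha\subseteq H_\alpha$ at stage $\alpha$ (an infinite compact set can be a countable convergent sequence already swallowed by the subgroup generated so far), in which case the committed point $y_\alpha$ cannot be chosen. The appeal to ``the structure theorem, passing to a quotient'' is not an argument. In effect you are trying to reprove, in one paragraph and for an \emph{arbitrary} compact $G$ of weight $\geq 2^\om$, a Malykhin--Shapiro-type theorem whose delicacy is precisely why the paper's proof is structured differently (compare the use of SCH in \cite[Theorem~5.8]{GM} quoted in Section~2).

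The paper sidesteps all of this. From $|G^\wedge|=w(G)\geq\cont$ and $cf(\cont)>\om$ it extracts in $G^\wedge$ a direct sum of $\cont$ copies of one cyclic group, hence a continuous open surjection $f\colon G\to K$ with $K=\T^{\cont}$ or $K=\Z(n)^{\cont}$; it then \emph{quotes} \cite{MS} and the construction of \cite[Theorem~5.5]{DT} for a proper dense pseudocompact $P\subseteq K$ all of whose countable subgroups are $h$-embedded, so that $P$ has only finite compact subsets and Corollary~\ref{Cor:SBB} applies to $P$; finally it sets $H=f^{-1}(P)$ and transfers reflexivity from $H/\ker f\cong P$ to $H$ via \cite[Theorem~2.6]{BCM}. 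That last step is essential and absent from your plan: $H=f^{-1}(P)$ contains the compact group $\ker f$, which is in general infinite, so $H$ itself \emph{does} have infinite compact subsets and Corollary~\ref{Cor:SBB} cannot be applied to it directly. To salvage your route you would have to either carry out the full combinatorial construction (and realistically only in the special quotients where it is known to succeed) or, as the paper does, add the pull-back step and the compact-kernel reflexivity theorem.
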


\begin{proof}
By \cite[Theorem~24.15]{HR}, we have $|G^\wedge|=w(G)\geq\cont=
2^\om$, where $w(G)$ is the weight of $G$, and a standard argument
(along with the fact that the cofinality of $\cont$ is uncountable) shows
that $G^\wedge$ contains a subgroup $S$ isomorphic to the direct
sum of $\cont$ many non-trivial cyclic subgroups, say,
$S=\oplus_{\alpha<\cont}C_\alpha$. Since the
group $G$ is topologically isomorphic to $G^{\wedge\wedge}$, it
admits a continuous homomorphism onto the product group $\prod
_{\alpha<\cont}C_\alpha^\wedge$, where each $C_\alpha^\wedge$ is
either a finite group or the circle group $\T$. To see this, it
suffices to consider the restriction to $S$ of every character
$\chi$ defined on $G^\wedge$. Again, since $cf(\cont)>\om$, there
are either $\cont$ summands $C_\alpha$ with $C_\alpha\cong\Z$ or
$\cont$ summands $C_\alpha\cong\Z(n)$, for some integer $n>1$.
Hence $G$ admits a continuous homomorphism either onto $\T^\cont$
or onto $\Z(n)^\cont$, for some $n>1$.

According to \cite{MS}, each of the groups $\T^\cont$ and
$\Z(n)^\cont$ contains a proper dense pseudocompact subgroup
without non-trivial convergent sequences. In fact, the argument from
\cite[Theorem~5.5]{DT} shows that the corresponding subgroup $P$ can
be chosen so that all countable subgroups of $P$ are $h$-embedded.
Therefore, by \cite[Proposition~2.1]{ACDT}, all compact subsets of $P$
are finite. It now follows from Corollary~\ref{Cor:SBB} that the
group $P$ is reflexive.

Let $f\colon G\to K$ be a continuous homomorphism onto $K$, where
$K$ is either $\T^\cont$ or $\Z(n)^\cont$. Since the homomorphism
$f$ is open, $H=f^{-1}(P)$ is a proper dense pseudocompact subgroup
of $G$. The restriction $\varphi=f\res_H$ is an open continuous
homomorphism with a compact kernel $N$. Since the image $P=
\varphi(H)\cong H/N$ is a reflexive group, we conclude that so is
$H$ (see \cite[Theorem~2.6]{BCM}).
\end{proof}

It was shown in \cite[Corollary~2.10]{GG} that every connected
Abelian group $G$ of weight $\kappa=\kappa^\om$ contains a proper
dense pseudocompact subgroup without non-trivial convergent
sequences. According to Corollary~\ref{Cor:SBB}, this implies
that such a group $G$ contains a proper dense reflexive
pseudocompact subgroup. We have just proved in Theorem~\ref{Ex:22}
that $G$ does contain a proper dense reflexive pseudocompact
subgroup without the assumption that $G$ is connected.

In Proposition~\ref{Prop:PP} below we complement Proposition~\ref{th:Per}
and clarify the permanence properties of the class of topological groups
satisfying ORC, with the aim to generalize Corollary~\ref{Cor:MGV}.

\begin{prop}\label{Prop:PP}
The class of precompact topological groups satisfying the open
refinement condition is closed under the formation of arbitrary
direct products.
\end{prop}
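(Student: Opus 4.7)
The plan is to verify the Open Refinement Condition directly from the definition. Let $G = \prod_{i \in I} G_i$ be a product of precompact groups each satisfying ORC, and let $f : G \to H$ be a continuous homomorphism onto a second countable group $H$. The first step is to reduce the problem to a countable index set, following the technique from the proof of Theorem~\ref{Prop:Pro}: extend $f$ to a continuous homomorphism $\widetilde{f}: \varrho{G} = \prod_i \varrho{G}_i \to \varrho{H}$, and note that $N = \ker\widetilde{f}$ is a closed invariant $G_\delta$-subgroup of the compact group $\varrho{G}$. Using the SIN property of compact groups, we find closed invariant $G_\delta$-subgroups $N_i \sub \varrho{G}_i$ with $\prod_i N_i \sub N$ such that $N_i = \varrho{G}_i$ for all $i$ outside some countable set $J \sub I$.

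Next, I would apply the ORC hypothesis on each factor indexed by $J$. For $i \in J$, the compact metrizable quotient $\varrho{G}_i / N_i$ contains the precompact second countable image $L_i$ of $G_i$ under the canonical projection; denote the induced continuous homomorphism $G_i \to L_i$ by $q_i$. Invoking ORC of $G_i$, we obtain a continuous open surjection $\pi_i : G_i \to K_i$ onto a second countable group $K_i$ and a continuous homomorphism $\sigma_i : K_i \to L_i$ with $q_i = \sigma_i \circ \pi_i$.

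Finally, I would assemble everything by composing the canonical projection $p_J : G \to \prod_{i \in J} G_i$ with the product $\prod_{i \in J} \pi_i$ to obtain a continuous open surjection $\pi : G \to K := \prod_{i \in J} K_i$ onto a second countable group. A kernel chase shows $\ker \pi \sub G \cap \prod_{i \in I} N_i \sub G \cap N = \ker f$, using $\ker \pi_i \sub \ker q_i \sub N_i$ for $i \in J$ together with $N_i = \varrho{G}_i$ for $i \notin J$. Hence $f = g \circ \pi$ for a unique group homomorphism $g : K \to H$, and $g$ is automatically continuous because $\pi$ is a continuous open surjection. This shows that $G$ satisfies ORC.

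The main obstacle is the initial reduction to a countable set of coordinates: dominating an invariant $G_\delta$-subgroup of $\varrho{G}$ by a product of invariant $G_\delta$-subgroups of the factors depends critically on the SIN property of compact groups, together with the fact that any $G_\delta$-neighborhood of the identity in $\varrho{H}$ pulls back to a set depending, up to a small error, on only countably many coordinates. Once this reduction is secured, the application of ORC on each factor and the verification of the kernel containment are routine.
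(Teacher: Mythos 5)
Your argument follows the paper's proof in all essentials: reduce to countably many coordinates, pass to second countable quotients of the completions of the factors, apply ORC to each relevant factor, and take the product of the resulting open homomorphisms onto a second countable group. The only (harmless) variations are that you obtain the countable reduction directly from the $G_\delta$ kernel of $\widetilde{f}$ rather than first citing a factorization lemma for products, and that you produce the final factoring map $g$ via the kernel containment $\ker\pi\subseteq\ker f$ together with openness of $\pi$ instead of constructing $g$ explicitly as a composition; both steps are correct.
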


\begin{proof}
Consider a product $G=\prod_{i\in I} G_i$, where each group $G_i$ is
precompact and satisfies ORC. Suppose that $f\colon G\to H$ is a
continuous homomorphism onto a second countable group $H$. We can
apply \cite[Lemma~8.5.4]{AT} to find a countable set $J\sub I$ and a
continuous homomorphism $q\colon\prod_{i\in J} G_i\to H$ such that
$f=q\circ\pi_J$, where $\pi_J$ is the projection of $G$ onto the
subproduct $G_J=\prod_{i\in J} G_i$. Taking into account that the
projection $\pi_J$ is an open homomorphism, we can assume without
loss of generality that the index set $I$ is countable.

Extend $f$ to a continuous homomorphism $\varrho{f}\colon
\varrho{G}\to \varrho{H}$. In what follows we identify the compact
groups $\varrho{G}$ and $\prod_{i\in I}\varrho{G}_i$. Arguing as in
the proof of Theorem~\ref{Prop:Pro}, we can find, for each $i\in
I$, a continuous homomorphism $h_i\colon \varrho{G}_i\to P_i$ onto a
second countable group $P_i$ and a continuous homomorphism
$\varphi\colon\prod_{i\in I} P_i\to \varrho{H}$ such that
$\varrho{f}=\varphi\circ{h}$, where $h\colon\prod_{i\in
I}\varrho{G}_i\to \prod_{i\in I} P_i$ is the topological product of
the homomorphisms $h_i$'s.

The subgroup $h_i(G_i)$ of $P_i$ has a countable base and, since
$G_i$ satisfies ORC, we can find an open continuous homomorphism
$g_i\colon G_i\to Q_i$ onto a second countable group $Q_i$ and a
continuous homomorphism $p_i\colon Q_i\to P_i$ satisfying
$h_i\res_{G_i}= p_i\circ{g_i}$. The topological product $g$ of the
homomorphisms $g_i$'s is an open continuous homomorphism of $G$
onto the second countable group $Q=\prod_{i\in I}Q_i$. Also we have
the equality $h\res_G=p\circ{g}$, where $p\colon Q\to P=\prod_{i\in
I} P_i$ is the product of the homomorphisms $p_i$'s.
$$
\xymatrix{G \ar@{>}[r]^{f} \ar@{>}[d]_{g} \ar@{>}[dr]^{h}
& H \\
Q \ar@{>}[r]^{p}  & P \ar@{>}[u]_{\varphi}}
$$
It follows from the definition of the homomorphisms
$\varphi,\,g,\,p\,$ that $f=(\varphi\circ{p}) \circ{g}$. Finally,
since $g\colon G\to Q$ is an open continuous homomorphism and the
group $Q$ is second countable, we conclude that $G$ satisfies ORC.
\end{proof}

We finish with a generalization of Corollary~\ref{Cor:MGV} to
topological products.

\begin{prop}\label{Prop:Prod}
Let $H$ be the product of an arbitrary family of precompact almost
metrizable Abelian groups with the Baire property. Then $H$ is a
quotient group of a reflexive precompact group $G$ with respect
to a closed pseudocompact subgroup, where all countable
subgroups of $G$ are $h$-embedded.
\end{prop}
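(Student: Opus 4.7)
The plan is to assemble the result as a direct corollary of the machinery developed earlier: verify that $H = \prod_{i \in I} H_i$ itself satisfies the hypotheses of Theorem~\ref{Th:Qu}, and then invoke that theorem.

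First I would observe that $H$ is a precompact Abelian group as a product of precompact Abelian groups. For the Baire property, Theorem~\ref{Prop:Pro} states precisely that the class of precompact Baire groups is closed under arbitrary products, so $H$ is Baire.

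The next step is to verify that $H$ satisfies ORC. By Proposition~\ref{Prop:PP}, it suffices to show that each factor $H_i$ satisfies ORC. This is exactly the content of (the proof of) Corollary~\ref{Cor:MGV}: since $H_i$ is precompact and almost metrizable, Remark~\ref{Rem:LR} furnishes a compact invariant subgroup $K_i \subseteq H_i$ such that $H_i/K_i$ has a countable base; the quotient $H_i/K_i$ is Baire by Lemma~\ref{Le:HM} applied to the Baire group $H_i$, and it satisfies ORC trivially as a second countable group. Since $H_i$ is precompact, it is $\omega$-narrow, and the compact subgroup $K_i$ is in particular pseudocompact, so Proposition~\ref{Le:FF} applies and gives that $H_i$ satisfies ORC. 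Now Proposition~\ref{Prop:PP} yields that $H = \prod_{i \in I} H_i$ satisfies ORC.

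At this point all the hypotheses of Theorem~\ref{Th:Qu} hold for $H$: it is precompact Abelian, Baire, and satisfies ORC. Applying Theorem~\ref{Th:Qu} produces a precompact group $G$ with $H \cong G/N$ for some closed pseudocompact subgroup $N \subseteq G$, such that $G$ is reflexive and every countable subgroup of $G$ is $h$-embedded, completing the proof. I do not expect any serious obstacle: the work has already been done in the intermediate permanence results (Theorem~\ref{Prop:Pro}, Proposition~\ref{Le:FF}, Proposition~\ref{Prop:PP}), and the present proposition is essentially a ``product version'' of Corollary~\ref{Cor:MGV} obtained by threading those results together and applying Theorem~\ref{Th:Qu} one final time.
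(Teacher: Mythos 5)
Your proposal is correct and follows essentially the same route as the paper: establish that $H$ is precompact and Baire via Theorem~\ref{Prop:Pro}, derive ORC for each factor from Remark~\ref{Rem:LR} together with Proposition~\ref{Le:FF}, pass to the product with Proposition~\ref{Prop:PP}, and finish with Theorem~\ref{Th:Qu}. Your explicit check that each $H_i/K_i$ is Baire (via Lemma~\ref{Le:HM}) before invoking Proposition~\ref{Le:FF} is a detail the paper leaves implicit, and it is a welcome one.
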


\begin{proof}
Suppose that $H=\prod_{i\in I} H_i$, where each $H_i$ is a
precompact almost metrizable Abelian group with the Baire property.
Clearly, $H$ is precompact. It follows from Theorem~\ref{Prop:Pro}
that $H$ is Baire, while Proposition~\ref{Le:FF} and Remark~\ref{Rem:LR}
together imply that each factor $H_i$ satisfies ORC. Hence, by
Proposition~\ref{Prop:PP}, $H$ satisfies ORC as well. To finish the
proof, it suffices to apply Theorem~\ref{Th:Qu}.
\end{proof}

\begin{coro}\label{Cor:Prod}
Let $H$ be the product of an arbitrary family of precompact
metrizable Abelian groups with the Baire property. Then $H$ is
a quotient group of a reflexive precompact group with respect
to a closed pseudocompact subgroup.
\end{coro}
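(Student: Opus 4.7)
The plan is to derive the corollary directly from Proposition~\ref{Prop:Prod}. The only observation required is that every metrizable topological group is automatically almost metrizable in the sense defined just before Remark~\ref{Rem:LR}. Indeed, almost metrizability of a group $G$ asks for a compact subgroup $K\sub G$ whose quotient $G/K$ is metrizable; for a metrizable $H_i$ it suffices to take the trivial subgroup $K=\{0_{H_i}\}$, which is (trivially) compact, invariant, and yields $H_i/K=H_i$, metrizable by hypothesis.

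With this remark in hand, every factor $H_i$ of the product $H=\prod_{i\in I}H_i$ is a precompact, almost metrizable, Abelian group with the Baire property, so the hypotheses of Proposition~\ref{Prop:Prod} are satisfied verbatim. Invoking that proposition produces a reflexive precompact group $G$ and a closed pseudocompact subgroup $N\sub G$ with $H\cong G/N$, which is exactly the conclusion sought. (As a bonus the resulting $G$ has all countable subgroups $h$-embedded, though this stronger property is not part of the statement.)

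There is no substantive obstacle: the corollary is essentially a specialization of Proposition~\ref{Prop:Prod} along the evident inclusion of classes ``metrizable $\Rightarrow$ almost metrizable''. The main conceptual load is carried upstream—in Theorem~\ref{Prop:Pro} (productivity of the Baire property among precompact groups), Proposition~\ref{Prop:PP} (productivity of ORC), and Theorem~\ref{Th:Qu} (existence of the reflexive precompact cover)—all of which have already been established and are simply being assembled through Proposition~\ref{Prop:Prod}.
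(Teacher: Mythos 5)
Your proof is correct and matches the paper's intent exactly: the corollary is stated there without proof precisely because it is the specialization of Proposition~\ref{Prop:Prod} obtained by noting that every metrizable group is almost metrizable (via the trivial compact subgroup), which is the observation you make. Nothing further is needed.
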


\section{Open problems}\label{Sec:Problem}
Theorem~\ref{Th:MM} on the reflexivity of some precompact Abelian
groups $G$ is based on the fact that both groups $G$ and its dual
$G^\wedge$ have no infinite compact subsets. However, the product
$G\times K$ of a reflexive group $G$ with any compact Abelian
group $K$ remains reflexive. Therefore, one can try to extend
Theorem~\ref{Th:Qu} to all precompact Abelian groups:

\begin{problem}\label{Prob:1}
Is every precompact Abelian group (with the Baire property) a
quotient of a reflexive precompact group (with the Baire property)?
\end{problem}

Here is a problem of a similar nature:

\begin{problem}\label{Prob:11}
Is every precompact Abelian group a continuous homomorphic image of
a reflexive precompact group $G$?
\end{problem}

The methods of the article do not work to settle the next problem
since infinite non-discrete Baire groups are uncountable:

\begin{problem}\label{Prob:VL}
Do there exist countable infinite reflexive precompact groups?
\end{problem}

It is also interesting, after Lemma~\ref{Le:Bai}, to find out
whether the Baire property is stable under extensions of groups
with respect to a compact invariant subgroup:

\begin{problem}\label{Prob:2}
Let $K$ be a compact invariant subgroup of a topological group $G$
such that the quotient group $G/K$ is Baire. Is $G$ Baire?
\end{problem}

\noindent \textbf{Acknowledgements.} The authors are indebted to
M.\,J.~Chasco for helpful discussions regarding the contents of the
article and to the referee for careful reading of the manuscript and
a number of valuable comments and suggestions, including the
sufficiency part of Lemma~\ref{Le:Mon}.


M.~Bruguera\\
{\small\em Dept. de Matem\'{a}tica Aplicada I} \\
{\small\em Universidad Polit\'{e}cnica de Catalu\~{n}a}\\
{\small\em C/ Gregorio Mara\~n\'on 44-50, 08028 Barcelona, Espa\~na}\\
{\small\em e-mail: m.montserrat.bruguera@upc.edu}
\bigskip

M.~Tkachenko\\
{\small\em Departamento de Matem\'aticas}\\
{\small\em Universidad Aut\'onoma Metropolitana}\\
{\small\em Av. San Rafael Atlixco $\#\,186$,
Col. Vicentina, Iztapalapa, }\\
{\small\em C.P. 09340,
Mexico D.\,F., Mexico}\\
{\small\em e-mail: mich@xanum.uam.mx}\\

\end{document}